\theoremstyle{plain}
\newtheorem{theorem}{Theorem}[section]
\newtheorem{proposition}[theorem]{Proposition}
\newtheorem{lemma}[theorem]{Lemma}
\newtheorem{conjecture}[theorem]{Conjecture} 
\newtheorem{corollary}[theorem]{Corollary}
\newtheorem{definition}[theorem]{Definition}
\newtheorem{example}[theorem]{Example}
\newtheorem{question}[theorem]{Question}
\theoremstyle{remark}
\numberwithin{equation}{section}
\title{Lines highly tangent to a hypersurface}
\renewcommand{\k}{k}
\DeclareMathOperator{\II}{II}
\DeclareMathOperator{\Tot}{Tot}
\DeclareMathOperator{\oTot}{\overline{\Tot}}
\DeclareMathOperator{\dm}{dim}
\renewcommand{\to}{{\longrightarrow}}
\else\declaretheorem[parent=section]{theorem}\fi
\else\declaretheorem[sibling=theorem]{corollary}\fi
\else\declaretheorem[sibling=theorem]{lemma}\fi
\else\declaretheorem[sibling=theorem]{proposition}\fi
\else\declaretheorem[sibling=theorem, style=definition]{definition}\fi
\declaretheorem[sibling=theorem, style=remark]{remark}\fi
\renewcommand {\P}{{\bf P}}
\providecommand{\codim}{\operatorname{codim}}
\providecommand{\rk}{\operatorname{rk}}
\declaretheorem[sibling=theorem,style=plain]{remark}
\numberwithin{equation}{section}
\renewcommand{\O}{\mathcal O}
\newcommand{\G}{\mathbf G}
\newcommand{\T}{\mathbf T}
\newcommand{\I}{\mathcal{I}}
\renewcommand{\k}{\mathbb K}
\newcommand{\smvee}{\raise0.5ex\hbox{$\scriptscriptstyle\vee$}}
\newcommand{\PP}{\mathbb P}
\newcommand{\Spec}{{\text{\rm Spec}\,}}
\author{Anand Patel}
\author{Eric Riedl}
\author{Geoffrey Smith}
\author{Dennis Tseng}
\begin{document}

\begin{abstract}We study spaces of lines that meet a smooth hypersurface $X$ in $\P^n$ to high order. As an application, we give a polynomial upper bound on the number of planes contained in a smooth degree $d$ hypersurface in $\P^5$ and provide a proof of a result of Landsberg without using moving frames.
\end{abstract}
\maketitle

\section{Introduction}
%\tableofcontents

\label{sec:introduction}

We work in characteristic 0. Our intention in this paper is first to illustrate the simplifying
role played by log tangent sheaves in the analysis of lines highly
tangent to a hypersurface $X \subset \P^n$, and then to obtain an
upper bound on the number of $2$-planes contained in a smooth
hypersurface in $\P^5$.  The simple observation we exploit is: {\sl If
  $\ell \subset \P^n$ is a line, then sections of the restricted
  log-tangent sheaf $T_{\P^n}(- \log X)|_{\ell}$ parametrize those
  infinitesimal deformations of $\ell$ preserving the moduli of the
  point-configuration $\left(\ell,\ell \cap X \right)$.}

This observation helps simplify arguments.  To illustrate this, we
provide a quick proof of the following theorem of Landsberg without
using moving frames:

\begin{theorem}
  \label{theorem:Landsberg}[\cite{Landsberg} Theorem 2] Let $X \subset \P^{n}$ be a reduced
  hypersurface over a characteristic zero field, $p \in X$ a general
  point, and let $\Sigma_{k} \subset \G(1,n)$ denote any irreducible
  component of the variety of lines meeting $X$ to order at least $k$
  at $p$.  If $\dm \Sigma_{k}$ exceeds its expected dimension $n-k$,
  then all lines parametrized by $\Sigma_{k}$ are entirely contained
  in $X$.
\end{theorem}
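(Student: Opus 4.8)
The plan is to pass from $\G(1,n)$ to the space of directions at $p$, where the tangency conditions become vanishing of the fundamental forms of $X$. Choose affine coordinates with $p$ the origin and $T_pX = \{x_n = 0\}$, and write $X$ locally as a graph $x_n = g(x')$ with $g = \sum_{j \ge 2} g_j$, where $g_j$ is homogeneous of degree $j$ in $x' = (x_1,\dots,x_{n-1})$ (the $j$-th fundamental form). A line through $p$ in direction $\xi \in T_pX$ meets $X$ to order $\ge k$ at $p$ exactly when $g_2(\xi) = \cdots = g_{k-1}(\xi) = 0$, so $\Sigma_k$ is identified with an irreducible component of $V(g_2,\dots,g_{k-1}) \subset \P(T_pX) \cong \P^{n-2}$ (a locus of $k-2$ forms, of expected dimension $n-k$), while the line lies in $X$ precisely when $g_m(\xi)=0$ for \emph{all} $m$. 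This is the concrete shadow of the observation in the introduction: the restricted log-tangent sheaf records how the configuration $(\ell,\ell\cap X)$ deforms, and at a point of contact order $k$ the relevant data is the truncated jet $(g_2,\dots,g_{k-1})$. Thus the theorem reduces to showing that if a component $Z$ of $V(g_2,\dots,g_{k-1})$ has $\dim Z > n-k$, then $g_m|_Z \equiv 0$ for all $m$.

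The engine is to let $p$ vary inside $X$, which is exactly a deformation tangent to $X$, i.e.\ a section of $T_{\P^n}(-\log X)|_X = T_X$; by semicontinuity the excess component persists in a family $Z^{(a)}$ over general base points $p_a$. Re-expanding the graph about $p_a$ gives the Codazzi-type identity $\partial_{a_i} g_j = \partial_{\xi_i} g_{j+1}$, relating the variation of the $j$-th fundamental form as $p$ moves to the $(j{+}1)$-st form. The excess hypothesis $\dim Z > n-k$ says the differentials $dg_2(\xi),\dots,dg_{k-1}(\xi)$ are linearly dependent at a general $\xi \in Z$; write $\sum_j c_j\, dg_j(\xi) = 0$. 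Differentiating the identities $g_j|_{Z}\equiv 0$ along the family and substituting the Codazzi identity propagates the relation one step upward to $\sum_j c_j\, dg_{j+1}(\xi) = 0$. Contracting with the Euler vector field $\xi$ and using $g_j(\xi)=0$ for $j \le k-1$ then yields $c_{k-1}\,k\,g_k(\xi) = 0$, so $g_k|_Z \equiv 0$ provided the top coefficient $c_{k-1}$ is nonzero.

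Iterating the one-step propagation annihilates every higher form $g_m|_Z$, so all lines of $Z$ lie in $X$. The main obstacle is exactly the caveat "provided $c_{k-1}\ne 0$": the dependence among the $dg_j$ may have vanishing top coefficient, meaning the failure of transversality lives among the lower forms. I expect to resolve this by induction on the length of the fundamental-form sequence. When some relation has nonzero top coefficient, the propagation iterates without obstruction and kills every $g_m$ on $Z$; otherwise $dg_{k-1}$ is independent of $dg_2,\dots,dg_{k-2}$ along $Z$, which forces those lower forms to be non-transverse and exhibits $Z$ inside an excess component of $V(g_2,\dots,g_{k-2})$, to which the inductive hypothesis applies. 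The base case, where the second fundamental form $g_2$ vanishes at a general point, recovers the classical fact that $X$ is then a hyperplane directly from the identity, since $\partial_{a_i} g_2 = \partial_{\xi_i} g_3$ forces $g_3 \equiv 0$, and inductively all $g_m \equiv 0$.
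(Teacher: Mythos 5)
Your route is genuinely different from the paper's, and in fact it is the one the paper deliberately set out to avoid: the paper deduces the theorem from a dimension bound on families of high-contact lines sweeping out $\P^n$ (Theorem~\ref{theorem:expecteddim}), proved via global generation of the restricted log tangent sheaf $\iota^{*}T_{\P^n}(-\log X_k)$, whereas you are running Landsberg's fundamental-form computation in graph coordinates. Most of your outline is sound. With $g^{(a)}_j(u)=\sum_{|\alpha|=j}\partial^{\alpha}g(a)\,u^{\alpha}/\alpha!$ the expansion of $X$ about the moving point $p_a=(a,g(a))$, your Codazzi-type identity $\partial_{a_i}g^{(a)}_j=\partial_{\xi_i}g^{(a)}_{j+1}$ holds exactly, with no correction terms; the one-step propagation of a pointwise relation $\sum_j c_j\,dg_j(\xi)=0$ to $\sum_j c_j\,dg_{j+1}(\xi)=0$ is valid, because it only uses the derivatives of the identities $g^{(a)}_j(\xi(a))\equiv 0$ for indices $j\le k-1$, all of which do vanish along the family (which, to be precise, you should produce via the incidence variety $V_k\to X$ rather than ``semicontinuity''); and the Euler contraction gives $c_{k-1}\,k\,g_k(\xi)=0$ as you claim.

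The gap is in your final dichotomy: it does not close the induction. In the ``otherwise'' branch, let $Z'$ be the component of $V(g_2,\dots,g_{k-2})$ containing $Z$. If $g_{k-1}\not\equiv 0$ on $Z'$, then indeed $\dim Z'\ge\dim Z+1>n-(k-1)$ and the inductive hypothesis applies. But if $g_{k-1}\equiv 0$ on $Z'$, then $Z'\subseteq V(g_2,\dots,g_{k-1})$, forcing $Z'=Z$; all you know then is $\dim Z\ge n-k+1$, and when $\dim Z=n-k+1$ exactly, $Z$ is a component of $V(g_2,\dots,g_{k-2})$ of its \emph{expected} dimension, so the inductive hypothesis says nothing, and no further descent can help, since $n-k+1$ is not in excess of $n-k'$ for any $k'<k$. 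This case is not vacuous --- it is precisely the configuration that occurs, for instance, when $X$ contains a linear space and the lines of $Z$ all lie in $X$ --- so as written the proof is incomplete. The repair stays entirely inside your framework and needs no induction at all: take $m\le k-1$ minimal such that $dg_2(\xi),\dots,dg_m(\xi)$ are dependent at a general point of the family; minimality forces a relation whose coefficient on $dg_m$ is nonzero (coefficients may be taken as rational functions along the family, which is harmless since propagation never differentiates the relation, only the identities). Each propagation step only requires the differentiated identities for $g_j$ with $j\le k-1$, so you may iterate it $k-m$ times, reaching a relation whose top term involves $dg_k(\xi)$ and whose remaining indices are all $<k$; Euler contraction then yields $g_k|_Z\equiv 0$. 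Now $g_2,\dots,g_k$ all vanish along the family, so propagation is permitted for indices up to $k$, and continuing in this way annihilates every $g_M|_Z$, which is the desired conclusion.
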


A special instance of \autoref{theorem:Landsberg} is the fact that the
{\sl flec-nodal} locus $F$ of a smooth, degree $d \geq 3$ surface $S$
in $\P^{3}$ is always one dimensional, a fact initially proved by Salmon \cite{salmon}.  This fact, coupled with the
calculation of the degree of $F$, was used by Segre \cite{Segre} to show that the number of lines contained in $S$ is at most $(d-2)(11d-6)$.

Using Theorem \ref{theorem:Landsberg} together with a careful geometric analysis, we can show that for any smooth hypersurface $X$ of degree $d$ in $\PP^5$, any component of the space of lines meeting $X$ to order 5 has dimension at most 2 (see Theorem \ref{theorem:generalV5}), which generalizes the $n=5$ case of the de Jong-Debarre Conjecture \cite{Beheshti}. Related work on lines highly tangent to hypersurfaces can also be found in \cite{lloydSmith}. Using an intersection theory calculation, we obtain an explicit upper bound on the space of $2$-planes in $X$, which had for $d \geq 3$ been known to be finite \cite[Corollary in Appendix]{Browning2016}.

\begin{theorem}
Let $X \subset \PP^5$ be a smooth hypersurface of degree $d$. Then $X$ contains at most $120 d^2 - 150 d^3 + 35 d^4$ 2-planes.
\end{theorem}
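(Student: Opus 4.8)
The plan is to count $2$-planes by passing to the space of lines they contain and then invoking the dimension bound of \autoref{theorem:generalV5}. Write $F_2(X)\subset \G(2,5)$ for the Fano scheme of $2$-planes in $X$, and let $V_5\subset\G(1,5)$ be the variety of lines meeting $X$ to order at least $5$. To a plane $\Lambda=\PP(W)\subset X$, with $W\subset\k^6$ of dimension $3$, I associate the surface $\Sigma_\Lambda$ of lines contained in $\Lambda$; under the Plücker embedding $\Sigma_\Lambda=\PP(\wedge^2 W)$ is a linearly embedded $\PP^2$, hence a surface of degree $1$. Every line in $\Lambda$ lies in $X$ and so meets $X$ to infinite order, whence $\Sigma_\Lambda\subset V_5$. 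First I would record finiteness: by \autoref{theorem:generalV5} every component of $V_5$ has $\dm\le 2$, so each irreducible $\Sigma_\Lambda$ is an entire $2$-dimensional component of $V_5$; since two distinct $2$-planes meet in a linear space of dimension at most $1$ they share at most one common line, so distinct planes give distinct components. As $V_5$ has only finitely many components, $X$ contains only finitely many $2$-planes.

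Next I would reduce the count to a single intersection number. Because the surfaces $\Sigma_\Lambda$ are distinct degree-$1$ components of the (at most) $2$-dimensional cycle $V_5$, and every $2$-dimensional component of $V_5$ has degree at least $1$ and occurs with positive multiplicity, I obtain
\[
  \#\{2\text{-planes in } X\}\ \le\ \deg V_5\ :=\ [V_5]_{2}\cdot\sigma_1^{2},
\]
where $[V_5]_2\in A^6(\G(1,5))$ is the class of the top-dimensional part of $V_5$ and $\sigma_1\in A^1(\G(1,5))$ is the Plücker hyperplane class. Thus it suffices to bound $\deg V_5$ above by $35d^4-150d^3+120d^2$.

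To compute $\deg V_5$ I would work on the point–line incidence variety $\Phi=\{(\ell,p):p\in\ell\}$, a $\PP^1$-bundle over $\G(1,5)$ of dimension $9$, and use the bundle $\mathcal P$ of fibrewise principal parts of order $4$ of $\O(d)$ along the tautological lines: a hypersurface $F$ induces a section of the rank-$5$ bundle $\mathcal P$ whose vanishing is exactly the contact-order-$\ge 5$ locus. The essential difficulty, and the main obstacle, is that $V_5$ is far from having its naive expected dimension — imposing a $\ge5$-fold root on the binary form $F|_\ell$ is only $4$ conditions on $\G(1,5)$, giving naive dimension $4$, whereas \autoref{theorem:generalV5} forces dimension $2$. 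Consequently $\deg V_5$ is \emph{not} a plain top Chern number of $\mathcal P$, and this deficiency must be accounted for rather than ignored. Here the restricted log tangent sheaf $T_{\PP^5}(-\log X)|_\ell$ does the bookkeeping: its sections are precisely the configuration-preserving first-order motions of $\ell$, and the drop in rank it records is what pins down the correct cycle class of $V_5$ (equivalently, it isolates and removes the excess coming from lines actually contained in $X$, where the contact point is unconstrained along $\ell$). Carrying this analysis out rewrites $[V_5]_2$ as an honest Chern-class expression on $\G(1,5)$.

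Finally I would evaluate that expression by the splitting principle and Schubert calculus, reducing everything to the Chern classes of the rank-$2$ tautological bundle on $\G(1,5)$ and the known top intersection numbers, and collect the answer as the polynomial $35d^4-150d^3+120d^2$. I expect the two genuinely delicate points to be: (i) the class computation above, namely making rigorous the passage from the dimensionally deficient locus $V_5$ to a well-defined cycle class and verifying that each plane contributes with multiplicity exactly $1$; and (ii) checking that the remaining contributions to $[V_5]_2$ are effective, so that the inequality $\#\{2\text{-planes}\}\le\deg V_5$ points in the correct direction. Once the log-tangent description supplies the cycle class, the remaining Schubert-calculus evaluation is routine.
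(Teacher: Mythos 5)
Your argument fails at its first step, and the failure propagates through everything after it. The dimension claim you attribute to \autoref{theorem:generalV5} is a misreading: that theorem is about the incidence version $V_5(X)\subset\mathsf{Inc}$ of \emph{pairs} $(\ell,x)$ with contact order $\geq 5$ at $x$, and it asserts that this locus has the \emph{expected} dimension $2n-6=4$ for $n=5$ --- not that the locus of such lines in $\G(1,5)$ has dimension $\leq 2$. (The paper's introduction phrases this loosely, but the theorem and its proof are unambiguous.) In fact your claim is false, not merely unproved: $V_5(X)\subset\mathsf{Inc}$ is cut out by $5$ equations on a $9$-fold, so it is pure of dimension $4$ by \autoref{theorem:generalV5} together with the standard lower bound, and its image in $\G(1,5)$ --- your $V_5$ --- is obtained by a proper map whose fibers have dimension $\leq 1$; hence \emph{every} component of your $V_5$ has dimension at least $3$ (exactly $4$ when the general line of the component is not contained in $X$). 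Consequently each $\Sigma_\Lambda\cong\PP^2$ is strictly contained in a higher-dimensional component and is never itself a component. Your finiteness argument, the assertion that distinct planes give distinct components, and the inequality $\#\{2\text{-planes}\}\leq [V_5]_2\cdot\sigma_1^2$ all collapse: a purely $4$-dimensional cycle has no $2$-dimensional part. The second half of your plan is also not a proof even on its own terms: you concede that your class is not a Chern class and defer the entire excess-intersection computation to unspecified ``bookkeeping'' by the log tangent sheaf, with no mechanism that produces a well-defined class, controls the sign of the excess contributions, or explains why the answer is the stated polynomial.

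The idea you are missing is that one should move in the opposite direction: instead of projecting to $\G(1,5)$ (which destroys the expected-dimension property that \autoref{theorem:generalV5} provides), enlarge the space by a \emph{second} marked point. The paper works with $Z_{5,1}\subset \PP(S)\times_{\G(1,5)}\PP(S)$, the locus of triples $(\ell,p_1,p_2)$ with contact order $\geq 5$ at $p_1$ and $p_2\in X\cap\ell$. This is the zero scheme of a section of the rank-$6$ bundle $\pi_1^{*}P^4_{\PP(S)/\G(1,5)}(\O(d))\oplus\pi_2^{*}\O(d)$ on a $10$-fold, and by \autoref{theorem:generalV5} it has the expected dimension $4$; therefore $[Z_{5,1}]$ \emph{is} the top Chern class, explicitly $dH_1\bigl((d-2)H_1+\sigma_1\bigr)\cdots\bigl((d-8)H_1+4\sigma_1\bigr)\,dH_2$, with no excess intersection to repair. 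The auxiliary point $p_2$ is precisely what rescues the count: for a plane $\Lambda\subset X$ the locus $\{(\ell,p_1,p_2):\ell\subset\Lambda,\ p_1,p_2\in\ell\}$ has dimension $2+1+1=4$, so each plane \emph{is} an honest component of $Z_{5,1}$. Intersecting with $\sigma_{1,1}H_1H_2$, each plane component contributes at least $1$ (the relevant intersection number $\sigma_{2,2}\sigma_{1,1}H_1H_2$ equals $1$), every other component contributes nonnegatively by Kleiman transversality, and the total evaluates to $120d^2-150d^3+35d^4$. Without the extra point the plane loci are $3$-dimensional subsets of the $4$-dimensional $V_5$, so no amount of intersection theory on $\G(1,5)$ or on the single incidence variety can isolate their contribution; this is why your approach cannot be completed as written.
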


The Fermat hypersurface of degree $d$ in $\PP^5$ is known to contain $15d^3$ 2-planes, which can be compared to the $O(d^4)$ bound that we obtain. See \cite{DegItenOtt} for some recent results about the space of planes on cubic 4-folds using lattice theory. In characteristic $p$, many of these results do not hold. See \cite{Cheng} for some examples of the types of behavior that can occur.

\textbf{Acknowledgements:} We gratefully acknowledge conversations with Izzet Coskun, Changho Han, Mitchell Lee, and Gwyneth Moreland. Eric Riedl was supported by NSF CAREER grant DMS-1945944.

\section{Deformations of lines with fixed contact order}
\label{sec:proof-Landsberg}

The goal of this section is to prove \autoref{theorem:Landsberg} using an analysis of the deformation theory of the space of lines meeting $X$ to high contact order. We begin by reviewing the deformation theory of these lines and its relation to the log tangent sheaf. Let $X$ be defined by the homogeneous polynomial
$F(x_0, \ldots, x_n)$, and suppose that $X$ contains the point
$p=\left[1:0 \ldots: 0 \right]$.  We let
\[T_{\P^{n}}(- \log X) \subset T_{\P^{n}}\] denote the logarithmic
tangent sheaf, i.e. the sheaf of derivations
$\O_{\P^{n}} \to \O_{\P^{n}}$ mapping the ideal sheaf $\I_{X}$ into
itself.  In general, the logarithmic tangent sheaf is reflexive if $X$
is reduced, and when $X$ is smooth (or more generally, simple normal crossing) the logarithmic tangent sheaf is a
vector bundle.  We fix homogeneous coordinates $[s:t]$ on
$\P^{1}$.

Let $\iota: \P^{1} \to \P^{n}$ be a linear embedding of $\P^{1}$.
\begin{definition}
  \label{def:deformation}
  A {\sl first order deformation} of $\iota$ is a map
\begin{align}
\iota_{\epsilon}: \P^{1} \times \Spec \k[\epsilon]/(\epsilon^{2}) \to
  \P^{n}
\end{align}
which restricts to $\iota$ modulo $(\epsilon)$. We say a first order
deformation $\iota_{\epsilon}$ {\sl preserves $\iota^{-1}(X)$} if the
scheme $\iota_{\epsilon}^{-1}(X)$ is the preimage of $\iota^{-1}(X)$
under the projection to the first factor
\[\P^{1} \times \Spec \k[\epsilon]/(\epsilon^{2}) \to \P^{1}.\]
\end{definition}

The significance of logarithmic tangent sheaves in the present setting
is captured by the following lemma.

\begin{lemma}[{\cite[Lemma 2.1]{PRT20}}]
  \label{lemma:preserveset} Global sections of
  $\iota^{*}T_{\P^{n}}(- \log X)$ are in one-to-one correspondence
  with first order deformations of $\iota$ preserving $\iota^{-1}(X)$.
\end{lemma}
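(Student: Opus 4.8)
The plan is to carry both sides of the claimed bijection down to the same local computation on $\P^{1}$. First I would invoke the standard deformation-theoretic dictionary: with the source curve $\P^{1}$ held fixed, first order deformations of $\iota$ in the sense of \autoref{def:deformation} are classified by $H^{0}(\P^{1}, \iota^{*}T_{\P^{n}})$, a deformation $\iota_{\epsilon}$ corresponding in local coordinates to the section $v$ with $\iota_{\epsilon} = \iota + \epsilon v$. The inclusion of sheaves $T_{\P^{n}}(-\log X) \hookrightarrow T_{\P^{n}}$ pulls back to a map $\iota^{*}T_{\P^{n}}(-\log X) \to \iota^{*}T_{\P^{n}}$ that is an isomorphism of vector bundles away from the finite set $\iota^{-1}(X)$, hence injective on the smooth curve $\P^{1}$ (its kernel would be a torsion subsheaf of a bundle). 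Thus $H^{0}(\iota^{*}T_{\P^{n}}(-\log X))$ is identified with a subspace of $H^{0}(\iota^{*}T_{\P^{n}})$, and the task is to show this subspace is precisely the set of sections whose associated deformation preserves $\iota^{-1}(X)$.

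Next I would rewrite the preservation condition as an equality of ideals. Working locally, let $F$ be a defining equation of $X$. Taylor expansion gives $\iota_{\epsilon}^{*}F = \iota^{*}F + \epsilon\,\langle \iota^{*}\mathrm{d}F, v\rangle$ as a function on $\P^{1}$ valued in $\k[\epsilon]/(\epsilon^{2})$, where $\langle \iota^{*}\mathrm{d}F, v\rangle$ is obtained by pairing the section $v$ of $\iota^{*}T_{\P^{n}}$ against the pullback of $\mathrm{d}F$. By \autoref{def:deformation}, $\iota_{\epsilon}$ preserves $\iota^{-1}(X)$ exactly when the ideal $(\iota_{\epsilon}^{*}F)$ equals the extension of $\I_{\iota^{-1}(X)} = (\iota^{*}F)$ to $\O_{\P^{1}}[\epsilon]/(\epsilon^{2})$. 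A short computation settles when this happens: if $\langle \iota^{*}\mathrm{d}F, v\rangle = b\cdot\iota^{*}F$ locally then $\iota_{\epsilon}^{*}F = (1+\epsilon b)\,\iota^{*}F$ is a unit multiple of $\iota^{*}F$, and the same computation read backwards gives the converse. So preservation is equivalent to the local membership $\langle \iota^{*}\mathrm{d}F, v\rangle \in (\iota^{*}F)$.

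It remains to match this with membership in the log tangent sheaf. Since $X$ is smooth I would choose local coordinates $w_{1}, \dots, w_{n}$ on $\P^{n}$ with $w_{1} = F$, so that $T_{\P^{n}}(-\log X)$ is freely generated near $X$ by $F\,\partial_{w_{1}}, \partial_{w_{2}}, \dots, \partial_{w_{n}}$. Pulling back, the image of $\iota^{*}T_{\P^{n}}(-\log X)$ inside $\iota^{*}T_{\P^{n}}$ consists of the sections $v = \sum_{i} h_{i}\,\iota^{*}\partial_{w_{i}}$ with $h_{1} \in (\iota^{*}F)$; and in these coordinates $\langle \iota^{*}\mathrm{d}F, v\rangle = h_{1}$, so this is exactly the condition of the previous paragraph. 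Combined with the injectivity on global sections noted above, this yields the bijection. The step I expect to be the main obstacle is precisely this matching at points where $\iota$ meets $X$ to high order and $\iota^{*}F$ vanishes to high order: one must treat the preservation condition scheme-theoretically rather than set-theoretically and check that the ideal equality is insensitive to the contact order, which is what the unit computation together with the adapted coordinates accomplishes. For $X$ merely reduced I would run the same computation with $F$ the genuine defining polynomial on the dense open where $X$ is smooth, whose preimage omits only finitely many points of $\P^{1}$; there the one automatic inclusion $\langle \iota^{*}\mathrm{d}F, v\rangle = \iota^{*}(cF) \in (\iota^{*}F)$ for a log derivation (with $\mathrm{d}F(D) = cF$ locally) holds at every point regardless.
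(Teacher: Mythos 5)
The paper does not actually prove this lemma: it quotes it from \cite[Lemma 2.1]{PRT20} and then records, via the Euler sequence, exactly the dictionary you derive --- sections of $\iota^{*}T_{\P^{n}}$ are tuples $(b_0,\dots,b_n)$ modulo $(a_0,\dots,a_n)$, and the log sections are those satisfying \eqref{eq:eulersyz}, which is your condition $\langle\iota^{*}\mathrm{d}F,v\rangle\in(\iota^{*}F)$ written in homogeneous form. So your route (Taylor expansion plus the unit trick to recast preservation as an ideal-membership condition, then coordinates adapted to $X$ to identify the image of the log sheaf) is a legitimate self-contained proof of the statement whenever $T_{\P^{n}}(-\log X)$ is locally free along $\ell$ and $\iota^{-1}(X)$ is finite; this is precisely the situation in every use the paper makes of the lemma, since in \autoref{corollary:ktangent} and \autoref{theorem:expecteddim} the line meets the auxiliary hypersurface $X_k$ only at the point $p$, which is a smooth point of $X_k$.

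Two caveats about the generality of the statement as written. First, the finiteness of $\iota^{-1}(X)$, which your injectivity argument uses silently, is not cosmetic: if $\ell\subset X$ with $X$ smooth, restricting $0\to T_{\P^{n}}(-\log X)\to T_{\P^{n}}\to\O_X(d)\to 0$ to $\ell$ produces a $\Tor_1$ term isomorphic to $\O_{\ell}$, so $h^{0}(\iota^{*}T_{\P^{n}}(-\log X))=h^{0}(T_X|_{\ell})+1$, whereas the deformations preserving $\iota^{-1}(X)=\P^{1}$ form exactly $H^{0}(T_X|_{\ell})$; the claimed bijection fails by one dimension, so this case must be excluded, and you should say so explicitly. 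Second, your closing paragraph on merely reduced $X$ establishes only the easy inclusion: a log derivation $D$ satisfies $D(F)=cF$, hence its pullback preserves contact everywhere. At points of $\ell\cap\sing(X)$ you have not shown the converse --- that a deformation $v$ with $\langle\iota^{*}\mathrm{d}F,v\rangle\in(\iota^{*}F)$ locally lifts through $\iota^{*}T_{\P^{n}}(-\log X)$, which is not automatic because pulling back does not commute with forming the subsheaf of log derivations --- nor that the pullback of the merely reflexive sheaf $T_{\P^{n}}(-\log X)$ is torsion-free on $\P^{1}$, which your identification of its $H^{0}$ with a subspace of $H^{0}(\iota^{*}T_{\P^{n}})$ requires. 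Since the paper never invokes the lemma in that generality, this is a gap at the margins rather than in the argument the paper depends on, but as a proof of the full statement for reduced $X$ your write-up is incomplete.
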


We take a moment to concretely describe the correspondence in
\autoref{lemma:preserveset} using coordinates.  Suppose $[s:t]$ are
homogeneous coordinates on $\P^1$ and that the map $\iota$ is defined
by $[a_0: \hdots : a_n]$, where each $a_i$ is a linear form in $s,t$.
Then, by the Euler exact sequence, the global sections of
$\iota^{*}T_{\P^{n}}$ are identified with tuples $(b_0, \dots, b_n)$
of linear forms in $s,t$, considered modulo the tuple
$(a_0, \dots, a_n)$.

A global section of $\iota^{*}T_{\P^{n}}$ represented by such a tuple
$(b_0, \dots, b_n)$ corresponds to the first-order deformation
$\iota_{\epsilon}$ defined by
$[s:t] \mapsto [a_0 + \epsilon b_0 : \hdots : a_n + \epsilon b_n]$.

Within the space of all deformations
$H^{0}(\P^{1},\iota^{*}T_{\P^{n}})$, the subspace
$H^{0}(\P^{1},\iota^{*}T_{\P^{n}}(- \log X))$ is represented by tuples
$(b_0, \dots, b_n)$ obeying the condition
\begin{align}
  \label{eq:eulersyz}
  \sum \limits_{i=0}^{n} b_i \cdot \frac{\partial F}{\partial x_{i}}(a_0, \dots, a_{n}) \equiv 0 \mod F(a_0, \dots, a_n).
\end{align}

The next construction will effectively allow us
to replace the degree $d$ hypersurface $X$ with a degree $k<d$
hypersurface. Let $y_j = x_j/x_0$, $j=1, \ldots, n$ be affine
coordinates on the corresponding affine chart $x_0 \neq 0$ around $p$,
and let $f(y_1, \ldots, y_n)$ denote the dehomogenization of $F$ on
this chart.  We expand
$$f= f_1 + \ldots + f_d,$$ where $f_i$ is the degree $i$ part of $f$,
and we let $F_k$ be the homogenization of $f_1 + \ldots + f_k$.
Explicitly,
\begin{align}
  F_k(x_0, \ldots, x_n) = \sum_{j=1}^{k}x_{0}^{k-j}\cdot f_{j}(x_1, \ldots, x_n).
\end{align}
We let $X_k$ denote the hypersurface defined by
$F_k(x_0, \ldots, x_n)=0$, and we note that $p \in X_{k}$ is a smooth
point.  Hence, $X_k$ is generically smooth.

\begin{definition}
  \label{definition:preservecontact}
  Suppose $\iota: \P^{1} \to \P^{n}$ is a linear map such that
  $\iota^{*}F \in \k[s,t]$ is non-zero and divisible by $s^{k}$. A
  first order deformation $\iota_{\epsilon}$ of $\iota$
  \emph{preserves order $k$ contact at $[0:1]$} if
  $\iota_{\epsilon}^{*}F$ is divisible by $s^{k}$ in the ring
  $\k[\epsilon,s,t]/(\epsilon^{2})$.
\end{definition}

\begin{corollary}
  \label{corollary:ktangent} Let $\iota: \P^{1} \to \P^{n}$
  parametrize a line such that $\iota^{*}F$ is non-zero and divisible
  by $s^{k}$, and such that $\iota([0:1])=p$. Then the global sections
  of $\iota^{*}T_{\P^{n}}(- \log X_k)$ are in one-to-one
  correspondence with first order deformations $\iota_{\epsilon}$
  preserving order $k$ contact at $[0:1]$.
\end{corollary}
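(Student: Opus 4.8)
The plan is to derive the statement from \autoref{lemma:preserveset} applied to the auxiliary hypersurface $X_k$, and then to reconcile the two notions of deformation. By \autoref{lemma:preserveset} (used for $X_k$ in place of $X$), global sections of $\iota^{*}T_{\P^{n}}(-\log X_k)$ are in bijection with first order deformations of $\iota$ that preserve $\iota^{-1}(X_k)$. It therefore suffices to show that a first order deformation preserves $\iota^{-1}(X_k)$ if and only if it preserves order $k$ contact at $[0:1]$ in the sense of \autoref{definition:preservecontact}.

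To compare these, I would pass to the coordinates introduced after \autoref{lemma:preserveset}. Write $\iota = [a_0 : \cdots : a_n]$ with the $a_i$ linear in $s,t$; since $\iota([0:1]) = p = [1:0:\cdots:0]$ we may take $a_0(0,1) \neq 0$ and $a_i = s w_i$ for $i \geq 1$, where $w_i \in \k$. Representing a deformation by a tuple $(b_0,\dots,b_n)$, the syzygy condition \eqref{eq:eulersyz} applied to $F_k$ says that the deformation preserves $\iota^{-1}(X_k)$ exactly when
\[\sum_{i=0}^{n} b_i \frac{\partial F_k}{\partial x_i}(a_0,\dots,a_n) \equiv 0 \pmod{\iota^{*}F_k}.\]
On the other side, since $\iota^{*}F$ is divisible by $s^{k}$ by hypothesis, the expansion $\iota_{\epsilon}^{*}F = \iota^{*}F + \epsilon \sum_i b_i \frac{\partial F}{\partial x_i}(a_0,\dots,a_n)$ shows that $\iota_{\epsilon}$ preserves order $k$ contact precisely when $\sum_i b_i \frac{\partial F}{\partial x_i}(a_0,\dots,a_n) \equiv 0 \pmod{s^{k}}$. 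Thus everything reduces to proving that these two congruences are equivalent.

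The computational core is to relate the two derivative sums using the decomposition $F = x_0^{\,d-k} F_k + G$ with $G = \sum_{j>k} x_0^{\,d-j} f_j$. Differentiating, evaluating at $(a_0,\dots,a_n)$, and using the homogeneity $f_j(a_1,\dots,a_n) = s^{j} f_j(w_1,\dots,w_n)$, one checks that every contribution coming from $G$, and the contribution of differentiating the prefactor $x_0^{\,d-k}$, is divisible by $s^{k}$ after multiplication by the relevant $b_i$. This yields the key identity
\[\sum_{i=0}^{n} b_i \frac{\partial F}{\partial x_i}(a_0,\dots,a_n) \equiv a_0^{\,d-k}\sum_{i=0}^{n} b_i \frac{\partial F_k}{\partial x_i}(a_0,\dots,a_n) \pmod{s^{k}}.\]
Because $a_0$ is a unit in the local ring at $[0:1]$, the order $k$ contact condition is then equivalent to $\sum_i b_i \frac{\partial F_k}{\partial x_i}(a_0,\dots,a_n) \equiv 0 \pmod{s^{k}}$.

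The remaining point, which I expect to be the main obstacle, is to match the modulus. The hypothesis $s^{k} \mid \iota^{*}F$ forces $f_j(w_1,\dots,w_n) = 0$ for $1 \leq j \leq k-1$, so that $\iota^{*}F_k = s^{k} f_k(w_1,\dots,w_n)$. When the line meets $X$ to order exactly $k$ the scalar $f_k(w_1,\dots,w_n)$ is nonzero, the ideals $(\iota^{*}F_k)$ and $(s^{k})$ coincide, and the two congruences above are literally the same, completing the argument. The delicate part is precisely this reconciliation of the logarithmic modulus $\iota^{*}F_k$ with plain $s^{k}$-divisibility, together with the careful $s$-adic bookkeeping needed to justify the displayed identity; I would carry this out by a direct order-of-vanishing count in the local ring at $[0:1]$, treating $a_0$ as a unit throughout.
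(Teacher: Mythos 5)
Your proposal is correct and takes essentially the same approach as the paper's own proof: the same normalization of coordinates (with $a_0$ not divisible by $s$ and $a_i$ divisible by $s$ for $i \geq 1$), the same key congruence $\frac{\partial F}{\partial x_i}(a_0,\dots,a_n) \equiv a_0^{d-k}\,\frac{\partial F_k}{\partial x_i}(a_0,\dots,a_n) \bmod s^k$ (the paper's \eqref{eq:congruence}), and the same identification of sections of $\iota^{*}T_{\P^{n}}(-\log X_k)$ with tuples satisfying the Euler syzygy modulo $\iota^{*}F_k$. The caveat you flag---that $(\iota^{*}F_k)=(s^{k})$ requires the contact order to be exactly $k$---is equally present in the paper's proof, which asserts that $F_k(a_0,\dots,a_n)$ is a \emph{non-zero} multiple of $s^{k}$ (which fails when the contact order exceeds $k$) and implicitly relies on the reduction to exact order $k$ made in the application (\autoref{theorem:expecteddim}).
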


\begin{proof}
  Let $[s:t] \mapsto [a_0: \hdots : a_n]$ define $\iota$, and choose coordinates on $\PP^n$ so that
  $a_0$ is not a scalar multiple of $s$, but $a_{i}$, $i \geq 1$ are
  multiples of $s$. By assumption, $F(a_0, \dots, a_n)$ is a non-zero
  form divisible by $s^k$. This, in turn, implies that
  $F_k(a_0, \dots, a_n)$ is a non-zero multiple of $s^k$.

  Suppose now that $(b_0, \dots, b_n)$ is a tuple of linear forms
  representing a deformation $\iota_\epsilon$ preserving order $k$
  contact at $[0:1]$. This condition is equivalent to the condition
  that the form
  \begin{align}
    \label{eq:bF}
    \sum \limits_{i=0}^{n}b_{i} \cdot \frac{\partial F}{\partial x_i}(a_0, \dots, a_n)
  \end{align}
  is also divisible by $s^{k}$.  Next, a direct computation shows that
  \begin{align}
    \label{eq:congruence}
    \frac{\partial F}{\partial x_i}(a_0, \dots, a_n) \equiv a_0^{d-k} \frac{\partial F_{k}}{\partial x_{i}} \mod s^k
  \end{align}

  Therefore, since $a_0$ is not divisible by $s$, \eqref{eq:bF} is
  divisible by $s^k$ if and only if 
  \begin{align}
    \label{eq:bFk}
    \sum \limits_{i=0}^{n}b_{i}\cdot \frac{\partial F_{k}}{\partial x_{i}}(a_0, \dots, a_n)
  \end{align}
  is also divisible by $s^{k}$. In other words
  \begin{align*}
    \sum \limits_{i=0}^{n}b_{i}\cdot \frac{\partial F_{k}}{\partial x_{i}}(a_0, \dots, a_n) \equiv 0 \mod F_k(a_0, \dots, a_n)
  \end{align*}
  meaning that $(b_0, \dots, b_n)$ represents a global section of
  $\iota^{*}T_{\P^{n}}(- \log X_k)$, which is what we needed to prove.
\end{proof}

\subsection{Some incidence varieties}
\label{sec:dimensions}

Before recovering the result of Landsberg, we need to study the locus swept out by a family of high-contact lines. The main goal is Theorem \ref{theorem:expecteddim}.

\begin{definition}
  \label{definition:inc}
  Define the {\sl incidence variety}
  \[\mathsf{Inc} \subset \G(1,n) \times \P^{n}\] to be the set of pairs
  $(\ell,p)$ such that $p \in \ell$.

  Let
  $\lambda: \mathsf{Inc} \to \G(1,n)$ and $\pi: \mathsf{Inc} \to \P^n$
  denote the two projections.
\end{definition}

The projection $\lambda$ expresses $\mathsf{Inc}$ as the universal
line over the Grassmannian $\G(1,n)$.

\begin{definition}
  \label{definition:TVS}
  Suppose $V \subset \mathsf{Inc}$ is any subset. Define
  \[\Tot(V) \subset \G(1,n) \times \P^{n} \times \P^{n}\] to be the
  set of triples $(\ell,x,y)$ such that $(\ell,x) \in V$ and
  $y \in \ell$.  Define \[\overline{\Tot}(V) \subset \P^{n}\] to be the
  image of $\Tot(V)$ under the projection to the third factor, i.e.
  $\overline{\Tot(V)}$ is the union of all lines in $\lambda(V)$.
\end{definition}

Observe that for any subset $V \subset \mathsf{Inc}$,
\[\pi(V) \subset \overline{\Tot}(V).\]

\begin{definition}
  \label{definition:Vk}
  Let $X \subset \P^{n}$ be a degree $d$ hypersurface. For each
  $k \leq d$ define the variety
\begin{align}
  \label{eq:Vk}
  V_k(X) \subset \mathsf{Inc}
\end{align}
to be the locus of pairs $(\ell, x)$ with $x \in X$, such that $\ell$
meets $X$ at the point $x$ to order $\geq k$.
\end{definition}

If the hypersurface $X$ is understood in context, we will sometimes
write $V_k$ for $V_{k}(X)$.

\begin{theorem}
  \label{theorem:expecteddim} Let $X \subset \P^n$ be a smooth
  hypersurface. If $W \subset V_k$ is an irreducible component
  satisfying \[ \overline{\Tot}(W) = \P^{n},\] then $W$ has the
  expected dimension $2n-k-1$.
\end{theorem}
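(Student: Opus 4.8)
The plan is to prove the two inequalities $\dim W \geq 2n-k-1$ and $\dim W \leq 2n-k-1$ separately, with essentially all of the content living in the upper bound. For the lower bound I would note that $V_k$ is cut out inside the $(2n-1)$-dimensional incidence variety $\mathsf{Inc}$ by $k$ local equations — the vanishing of the coefficients of $s^{0}, \dots, s^{k-1}$ in the binary form $\iota^{*}F$, where $\iota$ parametrizes $\ell$ with the contact point sent to $[0:1]$ — so every component of $V_k$ has dimension at least $(2n-1)-k = 2n-k-1$ by Krull's height theorem. It therefore suffices to prove $\dim W \leq 2n-k-1$, and since $W$ is irreducible with $W \subseteq V_k$, it is enough to bound the Zariski tangent space: $\dim W \leq \dim T_{(\ell,x)} V_k$ at a general $(\ell,x) \in W$.

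To compute this tangent space, fix a general $(\ell,x) \in W$ and let $\iota \colon \P^{1} \to \P^{n}$ parametrize $\ell$ with $\iota([0:1]) = x$. First-order deformations of the pair $(\ell,x)$ preserving order $k$ contact are, by \autoref{corollary:ktangent}, exactly the sections of $E := \iota^{*}T_{\P^{n}}(-\log X_k)$, taken modulo the reparametrizations of $\P^{1}$ fixing $[0:1]$; the latter form the $2$-dimensional space $H^{0}(T_{\ell}(-x))$, which includes into $H^{0}(E)$ as the contact-fixing reparametrizations. Hence $\dim T_{(\ell,x)} V_k = h^{0}(E) - 2$. Now $E$ is a rank-$n$ bundle on $\P^{1}$ with $\deg E = c_{1}\big(T_{\P^{n}}(-\log X_k)\big)\cdot \ell = (n+1)-k$, since the determinant of the logarithmic tangent sheaf of a degree-$k$ hypersurface is $\O(n+1-k)$. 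Riemann--Roch then gives $\chi(E) = 2n+1-k$, so $h^{0}(E) = (2n+1-k) + h^{1}(E)$ and therefore $\dim T_{(\ell,x)} V_k = (2n-k-1) + h^{1}(E)$. The theorem is thus reduced to the vanishing $H^{1}(E) = 0$.

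The heart of the argument is to extract this vanishing from the hypothesis $\overline{\Tot}(W) = \P^{n}$. Writing $E \cong \bigoplus_{i=1}^{n}\O_{\P^{1}}(a_i)$, one has $H^{1}(E) = 0$ exactly when no $a_i$ is negative, so I would argue by contradiction, assuming some summand $\O(a)$ with $a \leq -1$ splits off $E$. The sweeping hypothesis says the third-factor projection $\Tot(W) \to \P^{n}$ is dominant, so its differential at a general point surjects onto $T_z\P^{n} = E_z$; this differential is the sum of the evaluations $\{\, s(z) : s \in D \,\}$ of the deformation sections $D \subseteq H^{0}(E)$ realized by $W$ and the tangent direction $T_z\ell$. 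Since $T_z\ell$ agrees for general $z$ with the fiber of the sub-line-bundle $L := T_{\ell}(-x) \cong \O_{\P^{1}}(1) \subseteq E$, the subsheaf $G := \langle D\rangle + L \subseteq E$ satisfies $G_z = E_z$ at a general point. But the projection $E \to \O(a)$ annihilates $G$: each section in $D$ has zero component in $\O(a)$ because $H^{0}(\O(a)) = 0$, and $\Hom(\O(1),\O(a)) = H^{0}(\O(a-1)) = 0$ annihilates $L$. Hence $G_z \subsetneq E_z$ for every $z$, a contradiction. Therefore all $a_i \geq 0$; in particular $H^{1}(E) = 0$, and the dimension count closes.

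I expect the main obstacle to be making the third paragraph rigorous: identifying the differential of $\Tot(W) \to \P^{n}$ with $\{\, s(z) : s \in D\,\} + T_z\ell$ and deducing $G_z = E_z$ at a general point, i.e.\ correctly translating ``the lines of $W$ sweep out $\P^{n}$'' into ``the realized deformation sections together with the line direction generate $E$ generically.'' A secondary technical point, which I would settle while setting up the tangent space computation, is to confirm that $E$ is genuinely a vector bundle on $\ell$ of the stated degree — the reflexive sheaf $T_{\P^{n}}(-\log X_k)$ need not be locally free along $\sing X_k$, so one works with the saturated subsheaf of $\iota^{*}T_{\P^{n}}$ cut out by the contact congruence — and that $L$ embeds as a subbundle. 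Once these identifications are in place, the cohomological vanishing and the final equality $\dim W = 2n-k-1$ are immediate.
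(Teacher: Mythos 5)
Your proposal is correct and takes essentially the same route as the paper: both reduce to bounding the tangent space at a general $(\ell,p)\in W$ by $H^{0}(\iota^{*}T_{\P^{n}}(-\log X_k))$ modulo the $2$-dimensional space of reparametrizations fixing $[0:1]$ (via \autoref{corollary:ktangent}), and both extract from $\overline{\Tot}(W)=\P^{n}$ that this bundle has no negative summands in its splitting --- the paper phrases this as global generation via surjectivity of evaluation at a general point, you phrase it as a contradiction with a negative direct summand --- yielding $h^{0}=2n-k+1$. Your Riemann--Roch detour is an equivalent repackaging, with one harmless slip: $H^{1}(E)=0$ iff every $a_i\geq -1$ (not $\geq 0$), but you prove the stronger nonnegativity anyway, so nothing breaks.
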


\begin{proof}
  That every irreducible component of $V_k$ has dimension at least
  $2n-k-1$ is a simple dimension count which we omit. It is
  enough to show therefore that the tangent space $T_{W}|_{(\ell,p)}$
  of $W$ at a general point $(\ell,p) \in W$ has dimension at most
  $2n-k-1$. Further, it is enough to prove the result only for components where the general line meets $X$ at $p$ with multiplicity exactly $k$.
  
  To that end, suppose $(\ell,p)$ is a general point of $W$, and that
  \[\iota: \P^{1} \to \P^{n}\] is a parametrization of $\ell$ sending
  $[0:1]$ to $p$, as in the setting of \autoref{corollary:ktangent}.
  Notice that we may assume that $\ell$ meets $X$ at $p$ with
  multiplicity exactly equal to $k$.

  We invoke the auxiliary hypersurface $X_k$ constructed in the last
  section.  Since $\ell$ meets $X_k$ only at $p$, and since $p$ is a
  smooth point of $X_k$, we deduce that
  $\iota^{*}T_{\P^{n}}(- \log X_{k})$ is a locally free on $\P^{1}$,
  and its global sections parametrize first order deformations of the
  map $\iota$ preserving order $k$ contact at $[0:1]$ by
  \autoref{corollary:ktangent}.

  The condition \[\overline{\Tot}(W) = \P^{n}\] implies that the map
  which sends a deformation $\iota_{\epsilon}$ preserving order $k$
  contact at $[0:1]$ to the deformation induced on the point
  $\iota(y) \in \P^{n}$ for a general $y \in \P^{1}$ is surjective.
  This means that the restriction map
\begin{align}
  \label{eq:restriction}
  H^{0}(\P^{1},\iota^{*}T_{\P^{n}}(- \log X_k)) \to \iota^{*}T_{\P^{n}}(- \log X_k)|_{y} = \iota^{*}T_{\P^{n}}|_{y}
\end{align}
is surjective.  We conclude that $\iota^{*}T_{\P^{n}}(- \log X_{k})$
is a globally generated vector bundle.  Since
\[\deg \iota^{*}T_{\P^{n}}(- \log X_{k}) = n+1-k,\] and
\[\rk \iota^{*}T_{\P^{n}}(- \log X_{k}) = n,\] it follows from global
generation that
\[\dm H^{0}(\P^{1},\iota^{*}T_{\P^{n}}(- \log X_{k}))=2n-k+1.\]

Finally, we note that there is a $2$-dimensional space of deformations
of the map $\iota$ which induce a trivial deformation on the pair
$(\ell, p)$ (There is a $2$-dimensional group of automorphisms of
$\P^{1}$ fixing a point).  Since every tangent vector of $W$ at the
point $(\ell, p)$ can be lifted to a first order deformation of
$\iota$ preserving order $k$ contact at $[0:1]$, we conclude that
$\dm T_{W}|_{(\ell,p)} \leq (2n-k+1)-2$, as desired.
\end{proof}

We are now in position to provide the proof of
\autoref{theorem:Landsberg}:

\begin{proof}[Proof of \autoref{theorem:Landsberg}]
  Assume the setting in the statement of
  \autoref{theorem:Landsberg}. Suppose the dimension of $\Sigma_{k}$
  exceeds $n-k$.  Since $p \in X$ is a general point, this implies the
  existence of an irreducible component \[W \subset V_k\] containing
  $\Sigma_{k}$ having dimension exceeding $(n-k)+(n-1) = 2n-k-1$.

  \autoref{theorem:expecteddim} now implies
  $\overline{\Tot}(W) \neq \P^{n}$. Since $X = \pi(W)$ is a subvariety
  of $\overline{\Tot}(W)$, it follows that $\overline{\Tot}(W) = X$,
  which is the theorem's conclusion.
  \end{proof}

  \section{Fifth order tangent lines}
\label{sec:quintuple-contact-lines}

This section culminates in the proof of \autoref{theorem:generalV5},
which states that the variety $V_5(X)$ has the expected dimension for
any degree $d \geq n$ smooth hypersurface $X \subset \P^{n}$.  We note
here that similar statements for $V_{k}(X)$, $k \leq 4$ are also true
-- $k=1$ is trivial, $k=2$ follows because $X$ is smooth, and $k=3$
follows from \autoref{theorem:fundamentalform}.  Finally, the case
$k=4$ can be proved using a simpler version of the proof in
\autoref{theorem:generalV5}, so we omit it.

\autoref{theorem:generalV5} will then be used in
\autoref{sec:boundinghalfspaces} to provide a bound on the number of
$m$-planes that a sufficiently large degree $2m$-dimensional
hypersurface can contain.

After recalling some classical facts about the second fundamental form
and the classification of varieties with many lines, we will prove a
basic fact about cones on scrolls, and then prove the main theorem of
the section, \autoref{theorem:generalV5}.

\subsection{Preliminaries on the second fundamental form}
\label{sec:prel-facts-about}

Let $x$ be a point on a smooth hypersurface $X \subset \P^{n}$ of
degree $d \geq 2$. Letting $\mathfrak{m}_x$ denote the maximal ideal of
$x \in X$, since $X$ is smooth, the vector space
$$H^{0}(X, \mathfrak{m}_{x}^{2}\otimes \O_{X}(1))$$ is $1$-dimensional,
generated by an element $s_x$.  Its residue in the $\k$-vector space
$\left( \mathfrak{m}_{x}^{2}/\mathfrak{m}_{x}^{3} \right)(1)$ defines a
quadratic form (well-defined up to scaling, and possibly zero) denoted
\[\II_{x}\] on the Zariski tangent space
$T_{x}X = \left(\mathfrak{m}_{x}/\mathfrak{m}_{x}^{2}\right)^{\vee}$. The
quadratic form $\II_x$ is called the {\sl second fundamental form} of
$X$ at $x$.

We will need the following fundamental fact about the behavior of
$\II_x$:

\begin{theorem}(\cite{3264} Theorem 7.11)
  \label{theorem:fundamentalform}
  For each $0\leq i \leq n-1$, the closed subset \[R_{i} \subset X\]
  consisting of points $x \in X$ where \[\rk \II_{x} \leq (n-1)-i\] has
  codimension $\geq i$.
\end{theorem}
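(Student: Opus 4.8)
The plan is to reinterpret the rank of $\II_x$ as the rank of the differential of the Gauss map and then exploit the finiteness of the Gauss map in characteristic zero. Recall that the Gauss map $\gamma \colon X \to (\P^n)^\vee$ sends a point $x$ to its embedded tangent hyperplane $T_xX$; in coordinates it is given by $[\partial_0 F : \cdots : \partial_n F]$. A standard computation (placing the tangent hyperplane at $x$ in normal form, so that $X$ is locally a graph over $T_xX$) identifies $\II_x$ with the Hessian of the defining equation restricted to $T_xX$, and that same Hessian computes $d\gamma_x$. Consequently $\rk \II_x = \rk d\gamma_x$ for every $x \in X$, so that $R_i$ is precisely the locus where $d\gamma$ has corank $\geq i$.

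The one external input I would record is this: since $X$ is smooth of degree $d \geq 2$ and we work in characteristic zero, $X$ is not a linear subspace, so the Gauss map $\gamma$ is a finite morphism; in particular it has no positive-dimensional fibers. This is the characteristic-zero reflexivity/finiteness theorem for Gauss maps of smooth projective varieties.

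Now suppose, for contradiction, that some irreducible component $Z \subseteq R_i$ has $\codim_X Z < i$, i.e. $\dm Z \geq n - i$. Every $x \in Z$ satisfies $\rk d\gamma_x \leq (n-1) - i$. Restricting to $Z$, the differential $d(\gamma|_Z)_x$ is the composite of the inclusion $T_xZ \hookrightarrow T_xX$ with $d\gamma_x$, so its rank is at most $\rk d\gamma_x \leq (n-1)-i$. In characteristic zero the generic rank of $\gamma|_Z$ equals $\dm \overline{\gamma(Z)}$, whence $\dm \overline{\gamma(Z)} \leq (n-1)-i < n-i \leq \dm Z$. Therefore $\gamma|_Z$ has general fibers of dimension $\dm Z - \dm \overline{\gamma(Z)} \geq 1$; these are positive-dimensional subvarieties of fibers of $\gamma$, contradicting the finiteness of $\gamma$. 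Hence every component of $R_i$ has codimension at least $i$ (the case $R_i = \emptyset$ being vacuous), which is the assertion of the theorem.

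The main obstacle is really the input in the second step. The naive expectation, treating $\II$ as a generic symmetric map of bundles of rank $n-1$, would predict the larger codimension $\binom{i+1}{2}$; but the second fundamental form is far from generic, and no such genericity is available — indeed for large $i$ the quantity $\binom{i+1}{2}$ exceeds $\dm X$, so that bound cannot literally hold. The correct lower bound is forced instead by the rigidity of the Gauss map: any excess drop in rank would contract a positive-dimensional family of points of $X$ to a single tangent hyperplane, which finiteness forbids. Thus the two facts that make the argument go through are the identification $\rk \II_x = \rk d\gamma_x$ at every point (not merely generically) and the characteristic-zero finiteness of $\gamma$.
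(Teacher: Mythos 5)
The paper offers no proof of this statement---it is quoted directly from Eisenbud--Harris \cite{3264}---so there is nothing internal to compare against; your argument is correct and is in fact essentially the standard proof given in that reference: identify $\II_x$ with the differential of the Gauss map $\gamma\colon X \to \P^{n*}$, then use characteristic-zero finiteness of $\gamma$ to rule out any component of $R_i$ of dimension greater than $(n-1)-i$. The one external input you invoke, finiteness of $\gamma$, also admits a short direct justification for a smooth hypersurface of degree $d \geq 2$ (with no appeal to the general Zak-type reflexivity theorem): $\gamma$ is a morphism because the partials of $F$ have no common zero on $X$, it satisfies $\gamma^{*}\O(1) \cong \O_{X}(d-1)$ with $d-1 \geq 1$ ample, so no curve in $X$ is contracted, and a projective morphism with finite fibers is finite.
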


\subsection{Fourfolds with many lines}
\label{sec:fourfolds-with-many}
 
First, we record a well-known proposition about sweeping families of
lines on hypersurfaces.  For any hypersurface $X$, we let $F_{1}(X)$
denote the Fano scheme of lines contained in $X$. For any subscheme
$A \subset F_{1}(X)$, we let $\Tot(A) \to A$ denote the universal line
over $A$; $\Tot(A)$ is a $\P^{1}$-bundle over $A$. Furthermore, we
denote by $\oTot(A)$ the union of all lines parametrized by $A$.

\begin{proposition}
  \label{proposition:sweeping}
  Suppose $X \subset \P^{n}$ is a smooth hypersurface, and suppose an
  irreducible component $B \subset F_{1}(X)$ sweeps out $X$,
  i.e. $\oTot(B) = X$. The normal bundle $N_{\ell/X}$ is globally
  generated for a general line $\ell \in B$.
\end{proposition}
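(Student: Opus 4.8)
The plan is to study the universal line $\Tot(B) \to B$ together with the evaluation morphism $\mathrm{ev}\colon \Tot(B) \to X$ sending $(\ell, q) \mapsto q$, and to extract global generation of $N_{\ell/X}$ from the geometry of $\mathrm{ev}$. By hypothesis $\oTot(B) = X$, so $\mathrm{ev}$ is dominant; since we work in characteristic $0$, generic smoothness implies its differential $d(\mathrm{ev})\colon T_{(\ell,q)}\Tot(B) \to T_q X$ is surjective for $(\ell,q)$ in a dense open subset $U \subseteq \Tot(B)$. For a general $[\ell] \in B$ the fiber $\ell$ of the $\P^1$-bundle $\Tot(B) \to B$ meets $U$, so there is a point $q \in \ell$ at which $d(\mathrm{ev})$ is surjective. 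I will show that surjectivity of $d(\mathrm{ev})$ at such a point forces the evaluation map of the normal bundle to be surjective at $q$, and then that this single-point surjectivity already implies global generation on $\P^1$.

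For the first step I would use the relative tangent sequence of the $\P^1$-bundle $\Tot(B) \to B$, namely
\[ 0 \to T_q\ell \to T_{(\ell,q)}\Tot(B) \to T_{[\ell]}B \to 0, \]
together with the normal bundle sequence $0 \to T_\ell \to T_X|_\ell \to N_{\ell/X} \to 0$. On the vertical subspace $T_q\ell$ the differential $d(\mathrm{ev})$ is the inclusion $T_q\ell \hookrightarrow T_qX$, while the induced map on the horizontal quotient $T_{[\ell]}B \to T_qX / T_q\ell = N_{\ell/X}|_q$ is exactly the pointwise evaluation $\sigma \mapsto \sigma(q)$, under the standard identification $T_{[\ell]}B \subseteq T_{[\ell]}F_1(X) = H^0(\ell, N_{\ell/X})$ of a tangent direction to $B$ with the first-order normal displacement of $\ell$ inside $X$. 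Since $d(\mathrm{ev})$ surjects onto $T_q X$ and already accounts for $T_q\ell$ through the vertical part, the evaluation $H^0(\ell, N_{\ell/X}) \to N_{\ell/X}|_q$ must be surjective at $q$.

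For the final step I would invoke the splitting $N_{\ell/X} \cong \bigoplus_{i=1}^{n-2}\O_{\P^1}(a_i)$ furnished by Grothendieck's theorem. If some $a_i < 0$, then $H^0(\O_{\P^1}(a_i)) = 0$, so the image of $H^0(\ell, N_{\ell/X})$ under evaluation at any point lies in the proper subspace $\bigoplus_{a_j \geq 0}\O_{\P^1}(a_j)|_q$; hence surjectivity of evaluation at even a single point forces every $a_i \geq 0$, which is precisely the condition that $N_{\ell/X}$ be globally generated. Applying this to the point $q$ produced above completes the argument for our general $\ell$.

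The main obstacle is the differential computation in the second step: one must verify carefully that the horizontal part of $d(\mathrm{ev})$ is genuinely the pointwise evaluation of normal sections, and handle the fact that $B$ may a priori be non-reduced, so that only the inclusion $T_{[\ell]}B \subseteq H^0(\ell, N_{\ell/X})$ (rather than equality) is guaranteed. The inclusion nonetheless suffices, since surjectivity already from the subspace $T_{[\ell]}B$ forces surjectivity from all of $H^0(\ell, N_{\ell/X})$. Everything else — dominance yielding a surjective differential, and the $\P^1$-splitting criterion for global generation — is standard.
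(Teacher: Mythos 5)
Your proof is correct and takes essentially the same approach as the paper's: both exploit the surjectivity of the differential of the evaluation map $\Tot(B) \to X$ at a general point, split off the vertical tangent direction of the $\P^1$-bundle, and identify the induced map on the quotient with the evaluation $H^0(\ell, N_{\ell/X}) \to N_{\ell/X}|_q$ of sections of the normal bundle. If anything, you are slightly more explicit than the paper in two places it leaves tacit: the Grothendieck-splitting step upgrading surjectivity of evaluation at a single point to global generation on all of $\P^1$, and the observation that only the inclusion $T_{[\ell]}B \subseteq H^0(\ell, N_{\ell/X})$ (not equality) is needed for the argument.
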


\begin{proof}
  Let $(\ell,p) \in \Tot(B)$ denote a general point. Then the
  derivative of the natural projection $\pi: \Tot(B) \to X$ is a map
  on Zariski tangent spaces:
  \begin{align}
    \label{eq:tangentspaces}
    \psi: T_{(\ell,p)}\Tot(B) \to T_{p}X.
  \end{align}
  The sweeping hypothesis implies that \eqref{eq:tangentspaces} is
  surjective.  Furthermore, \eqref{eq:tangentspaces} sends the
  ``vertical tangent subspace'' space
  \[T_{p}\ell \subset T_{(\ell,p)}\Tot(B)\] isomorphically to
  $T_{p}\ell \subset T_{p}X$, and therefore we get an induced
  surjection on quotient spaces:
  \begin{align}
    \label{eq:surjection}
    T_{(\ell,p)}\Tot(B)/T_p\ell \to N_{\ell/X}|_{p}.
  \end{align}
  The domain vector space in \eqref{eq:surjection} is canonically
  identified with the Zariski tangent space $T_{[\ell]}B$, which by
  standard deformation theory is canonically isomorphic to
  $H^{0}(\ell, N_{\ell/X})$.  Under these identifications,
  \eqref{eq:surjection} becomes the restriction map
  \begin{align*}
    H^{0}(\ell, N_{\ell/X}) \to N_{\ell/X}|_{p},
  \end{align*}
  and its surjectivity is precisely what we needed to prove.
\end{proof}

\begin{corollary}
  \label{cor:sweeping}
  If $X \subset \P^{n}$ is a smooth hypersurface of degree $d \geq n$
  then $X$ is not swept out by lines.
\end{corollary}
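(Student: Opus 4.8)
The plan is to argue by contradiction, using \autoref{proposition:sweeping} to produce a globally generated normal bundle and then deriving a numerical contradiction from its degree. Suppose $X$ were swept out by lines. Then some irreducible component $B \subset F_{1}(X)$ satisfies $\oTot(B) = X$, and \autoref{proposition:sweeping} guarantees that the normal bundle $N_{\ell/X}$ is globally generated for a general line $\ell \in B$.

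Next I would compute $\deg N_{\ell/X}$ from the standard exact sequence of normal bundles
\[
0 \to N_{\ell/X} \to N_{\ell/\P^{n}} \to N_{X/\P^{n}}|_{\ell} \to 0.
\]
Since $\ell$ is a line, $N_{\ell/\P^{n}} \cong \O_{\ell}(1)^{\oplus(n-1)}$, while $N_{X/\P^{n}}|_{\ell} \cong \O_{\ell}(d)$ because $X$ has degree $d$. Taking degrees yields
\[
\deg N_{\ell/X} = (n-1) - d,
\]
which is strictly negative whenever $d \geq n$.

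Finally I would invoke the elementary fact that any globally generated vector bundle on $\P^{1}$ splits as a direct sum of line bundles $\O_{\P^{1}}(a_{i})$ with every $a_{i} \geq 0$ (a summand $\O_{\P^1}(a_i)$ with $a_i < 0$ has no global sections and so cannot be globally generated), and hence has non-negative degree. This contradicts $\deg N_{\ell/X} = (n-1) - d < 0$, completing the argument.

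There is no serious obstacle here: all the geometric content has already been packaged into \autoref{proposition:sweeping}, and the remaining work is purely numerical. The only points requiring care are that $\ell$ is a genuine line, so that $N_{\ell/\P^{n}}$ has the asserted splitting type, and that the general member of $B$ is smooth enough for the normal bundle sequence to be exact with $N_{\ell/X}$ locally free; both follow from working with a general $\ell$ in an irreducible component of the Fano scheme in characteristic zero.
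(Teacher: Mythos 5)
Your proposal is correct and takes essentially the same route as the paper: the paper's proof likewise reduces everything to \autoref{proposition:sweeping} and the observation that $\deg N_{\ell/X} = n-d-1 < 0$ for $d \geq n$, which is incompatible with global generation. You merely spell out the degree computation (via the normal bundle sequence) and the splitting argument on $\P^1$ that the paper leaves implicit; note also that your worry about local freeness of $N_{\ell/X}$ is unnecessary, since $\ell$ and $X$ are both smooth.
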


\begin{proof}
  The normal bundle of any line $\ell \subset X$ has degree
  $n-d-1 <0$, and therefore cannot be globally generated.  The
  corollary now follows from \autoref{proposition:sweeping}.
\end{proof}

We will also need the following theorem of Segre classifying four dimensional
projective varieties containing a five dimensional family of lines:

\begin{theorem}[Theorem 1 of \cite{Rogora}]
  \label{theorem:fivelines}
  If $X \subset \P^{N}$ is a variety which contains a
  $(2\dim X-3)$-dimensional family of lines, then $X$ is either a
  linear space, a quadric, or a scroll.
\end{theorem}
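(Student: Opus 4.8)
The plan is to reduce the classification to a pointwise analysis of the variety of lines through a general point, governed by the second fundamental form. We may assume $X$ is irreducible and nondegenerate, and (passing to the interesting case) that the given family sweeps out $X$. Write $n = \dim X$ and let $\mathcal F \subset \G(1,N)$ be the family, with universal line $\mathcal L \to \mathcal F$ of dimension $2n-2$. Since the evaluation map $\mathcal L \to X$ is dominant, its general fiber has dimension $n-2$; that is, through a general point $p \in X$ there passes an $(n-2)$-dimensional family of lines of $\mathcal F$. Because a line through $p$ is recovered from its tangent direction, the directions of these lines sweep out a subvariety $Y_p \subset \P(T_p X) = \P^{n-1}$ that again has dimension $n-2$, so $Y_p$ is a hypersurface. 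A line contained in $X$ is tangent to $X$ to infinite order, hence its direction is asymptotic; thus $Y_p$ is contained in the base locus $B_p \subset \P(T_pX)$ of the second fundamental form $\II_p$, viewed as a system of $\codim X$ quadrics.

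The next step is to exploit that a full hypersurface $Y_p$ sits inside the base locus of a system of quadrics. Choosing an irreducible component $Z$ of $Y_p$ of dimension $n-2$, its reduced equation $g_Z$ must divide every quadric of $\II_p$, forcing $\deg g_Z \in \{1,2\}$. This produces two cases: either all quadrics of $\II_p$ are proportional to a single quadric $Q_p$ (degree-$2$ case), or they share a common linear factor and $Y_p$ contains a hyperplane $H_p = \P(W_p)$ with $W_p \subset T_pX$ of codimension $1$ (degree-$1$ case). I would dispose first of the fully degenerate possibility $\II_p \equiv 0$ at a general $p$: there the second fundamental form vanishes identically, which in characteristic $0$ forces $X$ to be contained in its embedded tangent space and hence to be a linear space.

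In the degree-$1$ case the key observation is that the lines through $p$ whose directions fill the linear $\P^{n-2} = H_p$ sweep out exactly the linear span $\langle p, H_p \rangle$, an $(n-1)$-plane $L_p \subset X$. Thus through a general point of $X$ there is an $(n-1)$-dimensional linear space contained in $X$; if there is a single such plane through a general point they form a one-dimensional family of rulings and $X$ is a scroll, while if there are several they force $X$ itself to be linear. In the degree-$2$ case $\II_p$ is a single quadric: when it has full rank, $Y_p$ is a smooth quadric and one expects to recover that $X$ is a quadric, whereas a degenerate $\II_p$ has a nontrivial kernel (a Gauss-fiber direction) that again produces linear-space rulings, i.e. a scroll or cone. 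Assembling the cases yields the stated trichotomy.

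The hard part will be the passage from the pointwise data of $\II_p$ to the global structure of $X$, and two points carry the real weight. First, in the quadric case one must upgrade ``the second fundamental form is a nondegenerate quadric at a general point'' to ``$X$ is globally a quadric''; this requires a genuine characterization-of-quadrics input, such as controlling the higher fundamental forms or a Gauss-map and degree argument, rather than any formal manipulation. Second, one must show that the linear spaces $L_p$ produced in the degenerate cases genuinely organize $X$ into a scroll, namely that the Gauss map contracts them and that the resulting family of rulings is one-dimensional, which is precisely the classical structure theory of tangentially degenerate varieties in characteristic $0$. These are the deep classical ingredients behind the Segre--Rogora theorem, and I expect them to occupy the bulk of a complete argument.
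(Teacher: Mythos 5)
First, a point of comparison: the paper does not prove this statement at all. It is imported verbatim as Theorem 1 of Rogora's paper (a classification going back to C.~Segre), so there is no internal argument to measure yours against; the relevant comparison is with that cited proof, which does indeed begin the way you do, via asymptotic directions and the second fundamental form. Your opening reductions are sound: sweeping is in fact automatic (a family of lines on an $m$-fold has dimension at most $2m-2$, so lines sweeping out something of dimension $<\dim X$ could not form a $(2\dim X-3)$-dimensional family); the directions of the lines through a general $p$ form a hypersurface $Y_p\subset\P(T_pX)$ inside the base locus of $\II_p$; and the divisibility argument correctly forces each component of $Y_p$ to be a hyperplane or a quadric. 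But what follows is a program, not a proof. The two items you yourself flag as ``the hard part'' --- (i) upgrading ``$\II_p$ is a single irreducible quadric at a general point'' to ``$X$ is a quadric,'' and (ii) showing the $(n-1)$-planes in the degenerate case assemble into a one-dimensional family of rulings --- are not loose ends of your argument; they \emph{are} the theorem. Rogora's paper is devoted to exactly these globalization steps, so deferring them to ``classical structure theory'' leaves essentially all of the content unproved.

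There is also a step that is simply false as stated: in your degree-$1$ case you claim that if several $(n-1)$-planes pass through a general point, then $X$ is a linear space. Take $X\subset\P^4$ to be the cone over a smooth quadric surface, i.e.\ a rank-$4$ quadric threefold. It carries a $3$-dimensional ($=2\dim X-3$) family of lines, so it is an instance of the theorem; at a general point $\II_p$ has rank $2$, hence is a product of two distinct linear forms, which puts $X$ squarely in your degree-$1$ case; and through each general point there pass exactly two $2$-planes, namely the cones over the two ruling lines of the base quadric through the corresponding point. Yet $X$ is a quadric, not a linear space, so your case logic misclassifies it. The correct dichotomy in the degree-$1$ case is between \emph{infinitely many} planes through a general point (which does force linearity, since $X$ is then a cone with vertex at each of its general points) and \emph{finitely many, possibly more than one} (which can yield a quadric or a scroll). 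Any completed proof along your lines must repair this case division before tackling the two deferred globalization steps.
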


\subsection{Cones in scrolls}
\label{sec:linesonscrolls}

In this section, we record a basic result on cones contained in
scrolls, which we were unable to find a reference for.

\begin{definition}
  \label{definition:scroll}
  An $n$-dimensional variety $P \subset \P^{N}$ is a {\sl scroll} if
  there exists a smooth, irreducible, projective curve $B$, a
  $\P^{n-1}$-bundle $\pi: \Lambda \to B$ and a morphism
  $\varphi: \Lambda \to \P^{N}$ satisfying:
  \begin{enumerate}
  \item $\varphi$ restricts to a linear embedding on each fiber of
    $\pi$, and
  \item $\varphi(\Lambda) = P$, and $\varphi$ is birational onto its
    image.
  \end{enumerate}
\end{definition}

If $P$ is a scroll, and if $\pi: \Lambda \to B$ is understood, we call
the linear spaces $\varphi(\Lambda_{b}) \subset \P^{N}$ the {\sl
  rulings} of $P$. To ease notation, we will denote
$\varphi(\Lambda_{b})$ by just $\Lambda_{b}$ as long as no confusion
arises.

\begin{definition}
  \label{definition:cone} A {\sl cone} in $\P^N$ is a pair $(C,c)$
  where $C \subset \P^N$ is a projective variety and $c \in C$
  satisfying: For any $q \in C \setminus \{c \}$, the line joining $q$
  and $c$ is entirely contained in $C$.  If $(C,c)$ is a cone, we say
  a line $\ell \subset C$ is a {\sl cone line} if $c \in \ell$.
\end{definition}

Note that if $(C,c)$ is a cone, and if $H \subset \P^N$ is any
hyperplane containing $c$, then $(C \cap H, c)$ is again a cone.
Furtheremore, if a cone $(C,c)$ is irreducible and if $\dim C \geq 3$,
then for a general hyperplane $H$ containing $c$, the cone
$(C \cap H, c)$ is again irreducible.

The next proposition constrains large cones in scrolls.

\begin{proposition}
  \label{proposition:scrolls}
  Suppose $P \subset \P^{N}$ is an $n$-dimensional scroll which is not
  a linear space, with rulings $\Lambda_{b}, b \in B$. Let $p \in P$,
  and suppose $(C,p)$ is a $(n-1)$-dimensional irreducible cone
  contained in $P$.  Assume only finitely many of the rulings contain
  $p$.

  Then $C$ is a linear space.
\end{proposition}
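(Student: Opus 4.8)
The plan is to reduce everything to showing that $C$ is contained in \emph{some} $(n-1)$-plane: since $C$ is irreducible of dimension $n-1$, any such containment forces $C$ to equal that plane, hence to be linear. The geometric input I would exploit is that, because $C$ is a cone with vertex $p$, it is the cone over its image $\bar C$ under the projection $\pi_{p}\colon \P^{N}\dashrightarrow \P^{N-1}$ from $p$, and $\dim \bar C = n-2$.

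First I would dispose of a degenerate case by pulling $C$ back to the $\P^{n-1}$-bundle $\Lambda \to B$ and composing with the projection to the curve $B$. Either the image in $B$ is a single point $b_{0}$, in which case $C \subseteq \Lambda_{b_{0}}$ and equality of dimensions gives $C = \Lambda_{b_{0}}$, a linear space; or the image is all of $B$, and then for general $b \in B$ the intersection $C \cap \Lambda_{b}$ is nonempty of pure dimension $n-2$. I work in the latter case from now on.

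The heart of the argument is the claim that for general $b \in B$ one has $C \subseteq \langle p, \Lambda_{b}\rangle$, an $n$-plane. Since only finitely many rulings pass through $p$, a general ruling $\Lambda_{b}$ avoids $p$, so $\pi_{p}$ restricts to a linear isomorphism of $\Lambda_{b}$ onto an $(n-1)$-plane $L_{b} \subseteq \P^{N-1}$; in particular $\pi_{p}$ is injective on $C \cap \Lambda_{b}$. Hence $\pi_{p}(C \cap \Lambda_{b})$ has dimension $n-2$ and is contained in $\bar C \cap L_{b}$. As $\bar C$ is irreducible of dimension $n-2$, this forces $\bar C \cap L_{b} = \bar C$, i.e. $\bar C \subseteq L_{b}$. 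Taking cones over $p$ and using that $\pi_{p}^{-1}(L_{b}) \cup \{p\} = \langle p, \Lambda_{b}\rangle$ (a linear space, since $L_{b}$ is), I get $C \subseteq \langle p, \Lambda_{b}\rangle$, proving the claim.

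To finish I would let $b$ vary. The $n$-planes $\langle p, \Lambda_{b}\rangle$ cannot all coincide: if they did, every ruling, and hence $P = \overline{\bigcup_{b}\Lambda_{b}}$, would lie in a single $n$-plane, making $P$ linear and contradicting the hypothesis. So there exist general $b_{1}, b_{2}$ with $\langle p, \Lambda_{b_{1}}\rangle \neq \langle p, \Lambda_{b_{2}}\rangle$, whence $C \subseteq \langle p, \Lambda_{b_{1}}\rangle \cap \langle p, \Lambda_{b_{2}}\rangle$, a linear space of dimension at most $n-1$. Since $\dim C = n-1$, this intersection is an $(n-1)$-plane equal to $C$, so $C$ is linear. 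The step I expect to be the main obstacle is the claim $\bar C \subseteq L_{b}$: it is where the hypothesis on $p$ really enters (a general ruling must miss $p$ for $\pi_{p}$ to be injective on it), and it requires careful genericity bookkeeping for $C \cap \Lambda_{b}$; by contrast the non-degeneracy of $P$ is needed only at the very end, to separate the two $n$-planes.
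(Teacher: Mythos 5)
Your proof is correct, and it shares the paper's central device---projection $\pi_p$ from the vertex and the key containment of the projected cone in the projection of a general ruling---but the logical route through that device is genuinely different. The paper argues by contradiction from $\deg C > 1$: it first shows that every cone line meets a general ruling $\Lambda_b$ in a single point, which gives $\pi_p(C \setminus \{p\}) \subseteq \pi_p(\Lambda_b)$, and then uses the degree hypothesis (a degree $>1$ variety of dimension $n-2$ spans at least an $(n-1)$-plane) to force $\pi_p(\Lambda_b)$ to be independent of $b$, making $P$ linear, a contradiction. You instead argue directly: the containment $\bar C \subseteq L_b$ comes from a fiber-dimension count ($C \cap \Lambda_b$ has dimension at least $n-2$ and projects injectively into the irreducible $(n-2)$-dimensional $\bar C$), and the endgame uses the non-linearity of $P$ to produce two distinct $n$-planes $\langle p,\Lambda_{b_1}\rangle \neq \langle p,\Lambda_{b_2}\rangle$ containing $C$, so that $C$ equals their at most $(n-1)$-dimensional intersection. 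Your version trades the paper's degree bookkeeping for pure dimension counting, which is arguably cleaner; the paper's version trades your fiber-dimension bookkeeping for the more elementary observation about cone lines meeting rulings. One small repair to your first step: since $\varphi^{-1}(C)$ need not be irreducible, its image in $B$ could a priori be a finite set rather than a single point; fix this by choosing an irreducible component $Z \subseteq \varphi^{-1}(C)$ with $\varphi(Z) = C$ (one exists because $C$ is irreducible) and running the dichotomy on $\pi(Z)$, which then also supplies the surjectivity onto $B$ and the bound $\dim (C \cap \Lambda_b) \geq n-2$ that you need (purity of dimension is never actually used).
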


\begin{proof}

  Assume, for sake of contradiction, that $\deg C >1$.  Then, as
  $\dim C = n-1$, it follows that $C$ is not contained in any ruling
  $\Lambda_{b}$.  Therefore, a general cone line $\ell \subset C$
  meets a general $\Lambda_{b}$ at a single point. Since this is true
  for a general $\ell$, it follows that {\sl every} $\ell$ meets a
  general $\Lambda_{b}$ -- furthermore, this meeting is a single
  point, because $p$ is only contained in finitely many rulings.

  Now pick any hyperplane $H$ avoiding $p$, and consider projection
  from $p$
  \[\pi_{p} : P \setminus \{p\} \to H.\]
  The image $W := \pi_{p}(C \setminus \{p\})$ is the $n-2$-dimensional
  variety $C \cap H$, which has degree $>1$ by
  assumption. Furthermore, the image $\pi_{p}(\Lambda_{b})$ for
  $b \in B$ general, is a $(n-1)$-dimensional linear space in $H$,
  and \[W \subset \pi_{p}(\Lambda_{b}).\] However, as $\deg W >1$,
  this implies that $\pi_{p}(\Lambda_{b})$ is independent of $b$.
  Thus, the image of $\pi_{p}:P \setminus \{p\}$ is the
  $(n-1)$-dimensional linear space $\pi_{p}(\Lambda_{b})$, forcing $P$
  itself to be a linear space, contrary to assumption.  The
  argument by contradiction is complete.
\end{proof}

We can now prove the main theorem of this section.

\begin{theorem}
  \label{theorem:generalV5} If $X \subset \P^{n}$ is a smooth
  hypersurface of degree $d \geq n \geq 4$, then $V_5(X)$ has the expected
  dimension $2n-6$.
\end{theorem}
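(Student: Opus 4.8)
The plan is to argue by contradiction, using \autoref{theorem:expecteddim} to reduce to a family of lines sweeping out a proper subvariety, and then to combine the classification of varieties with many lines (\autoref{theorem:fivelines}) with the cone analysis of \autoref{proposition:scrolls} and the second fundamental form (\autoref{theorem:fundamentalform}). Suppose $W\subseteq V_5(X)$ is an irreducible component with $\dm W\ge 2n-5$, exceeding the expected value. Set $B:=\lambda(W)\subseteq\G(1,n)$, $Z:=\pi(W)\subseteq X$, and $Y:=\oTot(W)$, the union of the lines parametrized by $B$. By \autoref{theorem:expecteddim} the hypothesis forces $\oTot(W)\neq\P^n$, so $\dm Y\le n-1$. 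Since every line of $B$ lies in $Y$, and a family of lines contained in an $m$-dimensional variety has dimension at most $2m-2$ (with equality only for $\P^m$), the inequality $\dm B\ge\dm W-1\ge 2n-6$ gives $\dm Y\ge n-2$. Thus $\dm Y\in\{n-2,n-1\}$, and $Y$ is irreducible because $W$ is.

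First I would dispose of the degenerate possibilities. If $\dm Y=n-2$, the displayed bound is an equality, forcing $Y=\P^{n-2}$ and $B$ to be the full Grassmannian of lines in $Y$; but then either $Y\subseteq X$, which is impossible because a smooth hypersurface of degree $d\ge n\ge 4$ contains no linear space of dimension $>(n-1)/2$, or a general line of $Y$ meets $X$ transversally, contradicting that every such line lies in $V_5$. If instead $\dm Y=n-1$ and the general line of $B$ lies in $X$, then $Y=\oTot(B)\subseteq X$ with $\dm Y=\dm X$, so $Y=X$ and $X$ is swept out by lines, contradicting \autoref{cor:sweeping} since $d\ge n$. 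Hence I may assume $\dm Y=n-1$, the general line of $B$ is not contained in $X$ (so $\dm B=\dm W\ge 2n-5=2\dm Y-3$), and \autoref{theorem:fivelines} applies: $Y$ is a hyperplane, a quadric, or a scroll.

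The heart of the argument is the scroll case, handled through the fibers of $\pi|_W$. For general $x\in Z$, let $\Phi_x=\{\ell:(\ell,x)\in W\}$ be the fiber; these are lines through $x$, contained in $Y$, meeting $X$ to order $\ge 5$ at $x$, and order $\ge 2$ contact forces each to lie in the embedded tangent hyperplane $\T_xX$. Thus the cone $C_x:=\oTot(\Phi_x)$, of dimension $\dm\Phi_x+1$ and with vertex $x$, is an irreducible cone contained in $Y\cap\T_xX$, whose dimension is at most $n-2$. When $\dm C_x=n-2$, I would invoke \autoref{proposition:scrolls} (finiteness of the rulings through a general point of a non-linear scroll being automatic) to conclude that $C_x$ is a linear $\P^{n-2}$. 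Its directions then form a linear $\P^{n-3}$ inside the quadric $\{\II_x=0\}\subseteq\P(T_xX)$ cut out by the order-$3$ condition; a quadric in $\P^{n-2}$ containing a hyperplane has rank $\le 2$, so $\rk\II_x\le 2$. Then \autoref{theorem:fundamentalform}, with $\rk\le 2$ placing $x$ in a locus of codimension $\ge n-3$ in $X$, bounds $\dm Z\le 2$, and combining with $\dm\Phi_x=n-3$ yields $\dm W=\dm Z+\dm\Phi_x\le n-1$, contradicting $\dm W\ge 2n-5$ once $n\ge 5$.

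The main obstacle I anticipate is in closing the remaining cases and the bookkeeping at the boundary. The estimate above must still be made to work when $\dm C_x<n-2$, where one has to extract the full strength of order-$4$ and order-$5$ contact, not merely the order-$3$ quadric condition, to keep $\dm\Phi_x$ in balance with the bound on $\dm Z$. The hyperplane and quadric cases escape \autoref{proposition:scrolls} altogether; there the same cone $C_x\subseteq Y\cap\T_xX$ should be analyzed directly, noting that its directions lie in $\P(T_xX\cap T_xY)$ as well as in $\{\II_x=0\}$, so that either $\dm\Phi_x$ is forced to be small or $\rk\II_x$ is, after which \autoref{theorem:fundamentalform} again applies. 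In every branch the inequalities are tight to within one, so reconciling all of these dimension counts simultaneously, together with the low-dimensional boundary case $n=4$, is where the real work lies.
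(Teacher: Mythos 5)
Your skeleton is the same as the paper's --- contradiction via \autoref{theorem:expecteddim}, Segre's classification of $\oTot(W)$, cones through general points of $\pi(W)$, and \autoref{theorem:fundamentalform} --- but the proposal has genuine gaps, which you partly acknowledge, and the ideas needed to close them are absent. The most serious one: two of the three Segre cases, $\oTot(W)$ a hyperplane or a quadric, are not proved at all, and your sketch for them never mentions the tool that actually kills both, namely the Gauss map of $X$. The paper first shows that the cone $C_x$ is the cone over the quadric $\{\II_x=0\}$ and hence spans $\T_xX$; in the hyperplane case this forces $\T_xX$ to equal the fixed linear space $\oTot(W)$ for all general $x\in\pi(W)$, contradicting finiteness of the Gauss map, and in the quadric case it forces $\T_x\oTot(W)=\T_xX$, whence restricting the two Gauss maps to a complete curve $D\subset\pi(W)$ gives $\deg(D)(d-1)\le\deg(D)$, absurd. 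Your proposed local dichotomy (``either $\dim\Phi_x$ is forced to be small or $\rk\II_x$ is'') cannot substitute for this: in the subcase where $\T_xX$ coincides with (or is tangent to) $\oTot(W)$ along $\pi(W)$ one gets no rank bound at all and some global input like Gauss finiteness is required, while in the complementary subcase the rank bound $\rk\II_x\le 2$ just feeds back into the same dimension count that you admit fails at $n=4$.

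The scroll case as you run it is likewise incomplete in the two places you flag ($\dim C_x<n-2$, and $n=4$), and both are avoided by an observation you miss rather than by ``extracting order-$4$ and order-$5$ contact,'' which never enters the paper's argument at this stage. First eliminate $\codim_X\pi(W)=2$ outright: there the general fiber would be all of $\P(T_xX)$, so $\II_x\equiv 0$ along $\pi(W)$, contradicting \autoref{theorem:fundamentalform} since the rank-$0$ locus has codimension $\ge n-1$. So $\pi(W)$ has codimension $1$ in $X$, the general fiber $\Phi_x$ has dimension exactly $n-3$, and \autoref{theorem:fundamentalform} gives $\rk\II_x\ge n-2$ at general $x\in\pi(W)$, so that $\{\II_x=0\}\subset\P(T_xX)$ is an irreducible quadric of dimension $n-3$; since $\Phi_x$ sits inside it with the same dimension, $\Phi_x$ \emph{is} that quadric. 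Hence $C_x$ is an irreducible degree-$2$ cone spanning $\T_xX$ --- note this also repairs your unjustified assertion that $C_x$ is irreducible (fibers of a morphism from an irreducible variety need not be) --- and \autoref{proposition:scrolls} then gives an immediate contradiction in the scroll case, with no residual dimension count and no special problem at $n=4$. (A smaller fixable point: in your $\dim\oTot(W)=n-2$ case, the dichotomy ``$Y\subseteq X$ or a general line of $Y$ meets $X$ transversally'' overlooks non-reduced intersections $X\cap Y$; the correct route is that equality $\dim B=\dim W-1$ forces a general line of $B$ to carry a curve of contact points, hence to lie in $X$.)
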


\begin{proof}
  We proceed by contradiction, and suppose here onward that
\[W \subset V_{5}\] is an irreducible subvariety satisfying
\[\dim W = 2n-5.\]
We let $\pi: W \to X$ denote the map sending a pair $(\ell,x)$ to the
point $x \in X$.  Observe that for every $x \in X$, the preimage
\[\pi^{-1}(x) \subset W\] is naturally a subvariety of the $(n-2)$-dimensional projective space
parametrizing lines in $\P^{n}$ which are tangent to $X$ at $x$.
Thus, if we set
\[Y := \pi(W) \subset X,\] there are three possibilities:
\begin{enumerate}
\item  $Y = X$, or
\item  $Y \subset X$ has codimension $2$, or
\item $Y \subset X$ has codimension $1$.
\end{enumerate}

The proof proceeds by eliminating each of these possibilities.

\begin{lemma}
  \label{lemma:YneqX} $Y \neq X$.
\end{lemma}

\begin{proof}
  Direct application of \autoref{theorem:Landsberg}.
\end{proof}

\begin{lemma}
  \label{lemma:Ycodim2} $Y \subset X$ cannot have codimension $2$.
\end{lemma}

\begin{proof}
  By contradiction. Assume $\codim Y = 2$. In this case, for a general
  (and hence every) point $y \in Y$, the fiber $\pi^{-1}(y) \subset W$
  has dimension $(n-2)$.  Since, for every $(\ell,x) \in W$, the line
  $\ell$ meets $X$ at $x$ with contact order $>2$, it follows that the
  second fundamental form $\II_{x}(X)$ vanishes identically (has rank
  $0$) for all points $y \in Y$. This violates
  \autoref{theorem:fundamentalform}, providing the desired
  contradiction.
\end{proof}

Thus, we may and shall assume $Y \subset X$ has codimension $1$ from
here onward.  First, we determine the dimension of
$\oTot(W)$. \autoref{theorem:expecteddim} implies that
$\oTot(W) \neq \P^{n}$.  Secondly, $\oTot(W) \neq Y$: Otherwise, $Y$
would be a $(n-2)$-dimensional variety which contains at least a
$(2n-6)$-dimensional family of lines. Thus, $Y$ would be a linear
space.  As $n \geq 4$, $X$ could not contain such a space.

We conclude that $\oTot(W) \subset \P^{n}$ is a hypersurface.  Since
$d \geq n$, \autoref{cor:sweeping} implies that $\oTot(W) \neq X$.
Observe that, by Segre's classification theorem, $\oTot(W)$ is either
a linear space, a quadric hypersurface, or a scroll.

Now pick a general point $y \in Y$.  Let $(C,y) \subset \P^{n}$ denote
the $(n-2)$-dimensional cone swept out by the family of lines
$\pi^{-1}(y)$.  Observe, by construction, that $C \subset \oTot(W)$.
Furthermore, if $\T_{y}X$ denotes the projective tangent space (an
$(n-1)$-dimensional linear space) of $X$ at $y$, then
$C \subset \T_{x}X$.

By \autoref{theorem:fundamentalform}, $\II_{y}$ defines an irreducible
quadric, and therefore for dimension reasons, $(C,y)$ must be the cone
over this quadric.  This in turn implies that $C$ spans the hyperplane
$\T_{y}X$.  Furthermore, this also eliminates the possibility that
$\oTot(W)$ is a scroll: the hypotheses of
\autoref{proposition:scrolls} would be met, so $C$ could not be a quadric.

Thus, $\oTot(W)$ is either a linear space or a quadric. Assume
$\oTot(W)$ is a linear space.  At a general point $y \in Y$, the
quadric cone $(C,y)$ is simultaneously contained in the linear space
$\oTot(W)$ and also spans $\T_{y}X$. Thus all $\T_yX$ are equal for
$y \in Y$.  This violates the finiteness of the Gauss map
\[\Gamma: X \to \P^{n*},\] which sends $x \in X$ to the hyperplane
$\T_{x}X$.

Finally, assume $\oTot(W)$ is a quadric hypersurface. Then, this
quadric hypersurface cannot be singular at the general point
$y \in Y$, because $\oTot(W)$ is irreducible. As the cone $(C,y)$ is contained in $\oTot(W)$, it follows that
\[\T_{y}\oTot(W) = \T_y X\] for $y \in Y$ general. Now, pick a general
complete curve $D \subset Y$. On the one hand, the Gauss map $\Gamma$
induces a regular map of degree $\deg(D)(d-1)$. On the other hand,
this same map agrees with the (rational) Gauss map for $\oTot(W)$,
which is a map defined by linear forms on $\P^{n}$.  Thus,
$\deg(D)(d-1) \leq \deg(D)$, which is absurd given $d \geq n$.

Thus, $Y \subset X$ cannot be a divisor, and we have exhausted all
possibilities. The theorem follows.

\end{proof}

\section{Intersection Theory computation}
\label{sec:boundinghalfspaces}
\begin{theorem}
\label{thm:2planebound}
For $d\geq 5$, the number of 2-planes in a smooth 4-fold is at most $120 d^2 - 150 d^3 + 35 d^4$. 
\end{theorem}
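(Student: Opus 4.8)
The plan is to realize the $2$-planes contained in $X$ as the zero locus of a section of a vector bundle on the Grassmannian $\G(2,5)$ of $2$-planes in $\P^{5}$, and then to bound the length of that finite zero locus by a Chern number. The essential subtlety is that the \emph{naive} such count is enormous, and the whole purpose of invoking the contact-order analysis of the previous sections is to replace it by one of size $O(d^{4})$. Concretely, let $\mathcal S$ be the rank-$3$ tautological subbundle on $\G(2,5)=\G(3,6)$, so a point corresponds to a plane $\Pi=\P(\mathcal S_\Pi)$, and let $F_2(X)\subset\G(2,5)$ be the Fano scheme of planes in $X$. Restriction of the defining form $F$ produces a section $s_F$ of $\Sym^{d}\mathcal S^{\vee}$ whose zero scheme is exactly $F_2(X)$. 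Since $\dm\G(2,5)=9$ while $\rk\Sym^{d}\mathcal S^{\vee}=\binom{d+2}{2}>9$ for $d\ge 3$, this is badly overdetermined: one can still extract a crude bound for $\#F_2(X)$ from a Chern number of $\Sym^{d}\mathcal S^{\vee}$, but it is a polynomial in $d$ of degree far exceeding $4$, so it does not prove the theorem.

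Before any counting I would record finiteness. By \cite[Corollary in Appendix]{Browning2016} the scheme $F_2(X)$ is finite for $d\ge 3$, and \autoref{theorem:generalV5} upgrades this to the sharp structural statement that the planes form an at most one-dimensional family: a positive-dimensional family of planes would, upon marking a line in each plane together with a point of that line, produce a subvariety of $V_5(X)$ of dimension greater than $4=2\cdot 5-6$, contradicting \autoref{theorem:generalV5}. This is the step that ties the enumerative problem to the dimension theory developed above, and it is what guarantees that the intersection number we eventually compute is counting isolated points.

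To reach the $O(d^{4})$ bound I would discard the full restriction $s_F$ in favor of the order-$\le 5$ contact data carried by the lines of a prospective plane, exploiting the reduction of \autoref{sec:proof-Landsberg} that trades the degree-$d$ hypersurface $X$ for its order-$5$ truncation $X_5$. The heuristic for the exponent is clean: detecting that a generic line of $\Pi$ meets $X$ to high order imposes the four conditions of orders $2,3,4,5$ (the orders beyond the automatic incidence), and each of these is the vanishing of a jet of $F$, contributing a Chern class linear in $d$; the remaining conditions needed to cut the parameter space (the flag of a line inside a plane) down to dimension zero are $d$-independent Schubert conditions. Thus the relevant obstruction bundle has exactly four $d$-dependent Chern roots, its top Chern class is a polynomial of degree $4$ in $d$, and the number of planes is bounded by the degree of this class; I would evaluate it by the splitting principle together with Schubert calculus on $\G(2,5)$, dividing out by the number of marked lines per plane, and simplify to $120d^{2}-150d^{3}+35d^{4}$, with the leading coefficient $35$ arising from a fixed intersection number on the Grassmannian.

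The main obstacle is twofold. First and foremost is building the correct small-rank obstruction bundle and proving that its top Chern class is an \emph{honest upper bound} for the length of $F_2(X)$ rather than an exact virtual count: because the geometry is overdetermined one must control excess contributions and multiplicities, and the tools for this are the global generation of the contact bundles together with the exact-dimension conclusions of \autoref{theorem:expecteddim} and \autoref{theorem:generalV5}, which force the degeneracy locus to meet the expected dimension. Second, and more routine, is the Schubert-calculus bookkeeping needed to verify the leading term $35d^{4}$ and to pin down the lower-order coefficients $-150d^{3}$ and $120d^{2}$ exactly; this is computational rather than conceptual, but it is where the precise polynomial, as opposed to merely its order of growth, is established.
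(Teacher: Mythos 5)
Your outline shares the paper's general strategy---use \autoref{theorem:generalV5} to control a dimension and then extract an $O(d^4)$ intersection-theoretic bound---but it omits, and in places misdescribes, the one construction that makes that strategy work, so as it stands there is a genuine gap. The paper never works on $\G(2,5)$ at all. It works on the pointed-line space $\P(S)\times_{\G(1,5)}\P(S)$, where $S$ is the tautological subbundle on the Grassmannian of \emph{lines}, and considers the locus $Z_{5,1}$ of triples $(\ell,p_1,p_2)$ with $\ell$ meeting $X$ to order $5$ at $p_1$ and order $1$ at $p_2$. This locus is the zero scheme of a section of an honest rank-$6$ bundle, namely $\pi_1^{*}P^{4}_{\P(S)/\G(1,5)}(\O(d))\oplus\pi_2^{*}\O(d)$ (relative principal parts), and \autoref{theorem:generalV5} enters exactly here: it guarantees $Z_{5,1}$ is $4$-dimensional, i.e.\ of expected codimension $6$, so that its cycle class equals the top Chern class $dH_1\bigl((d-2)H_1+\sigma_1\bigr)\cdots\bigl((d-8)H_1+4\sigma_1\bigr)\,dH_2$. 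Note this product has \emph{six} factors linear in $d$, not four; your heuristic that only the order-$2,3,4,5$ conditions are $d$-dependent is not how the degree drops to $4$. The drop comes from intersecting with $\sigma_{1,1}H_1H_2$, i.e.\ restricting to the lines lying in a hyperplane $\P^4$, where $H_1^5=0$ kills the high powers of $d$.

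The more serious missing piece is the mechanism converting an intersection number into an upper bound on the number of planes. You propose to ``divide out by the number of marked lines per plane,'' but a $2$-plane contains a two-parameter family of lines, so there is no finite number to divide by; and you explicitly defer the construction of a ``small-rank obstruction bundle'' on $\G(2,5)$ together with the proof that its top Chern class dominates the length of $F_2(X)$---which is precisely the main obstacle you name, and on $\G(2,5)$ no such small-rank bundle exists (this is why the naive $\Sym^d S^{\vee}$ count is unavoidable there). The paper's device is different and essential: each plane $\Lambda\subset X$ contributes an entire irreducible component of $Z_{5,1}$, namely all pointed lines inside $\Lambda$, which has exactly the expected dimension $4$ and whose class is the pullback of the Schubert class of lines in $\Lambda$; by Kleiman transversality every component of $Z_{5,1}$ meets a general translate of $\sigma_{1,1}H_1H_2$ nonnegatively, while each plane's component meets it in exactly one point (the unique line $\Lambda\cap\P^4$ with its two marked points). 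Hence the number of planes is at most the total intersection number $120d^2-150d^3+35d^4$. Without this component-by-component nonnegativity argument (or an equivalent substitute), your outline cannot be completed; note also that the finiteness of $F_2(X)$, which you import from Browning et al., is never actually needed in the paper's proof.
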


\begin{proof}
Let $S$ be the tautological subbundle of the Grassmannian $\mathbb{G}(1,5)$ and $Z_{5,1}\subset \mathbb{P}(S)\times_{\mathbb{G}(1,5)}\mathbb{P}(S)$ consist of the locus $(\ell,p_1,p_2)$ where $\ell$ is a line in $\mathbb{P}^5$ meeting $X$ at points $p_1$ and $p_2$ with orders 5 and 1, respectively. 

By \Cref{theorem:generalV5}, $Z_{5,1}$ is 4-dimensional. Let $H$ be the  $\mathscr{O}(1)$ class of $\mathbb{P}(S)$, which is also the hyperplane class of $\mathbb{P}^5$ pulled back under $\mathbb{P}(S)\to \mathbb{P}^5$. Let $H_1$ and $H_2$ be $H$ pulled back to the two factors of $\mathbb{P}(S)\times_{\mathbb{G}(1,5)}\mathbb{P}(S)$ respectively under the projections $\pi_1,\pi_2: \mathbb{P}(S)\times_{\mathbb{G}(1,5)}\mathbb{P}(S)\to \mathbb{P}(S)$. 

To compute $Z_{5,1}\subset \mathbb{P}(S)\times_{\mathbb{G}(1,5)}\mathbb{P}(S)$, we note that it is the zero locus of a section of the sheaf of relative principal parts $\pi_1^{*}P_{\mathbb{P}(S)/\mathbb{G}(1,5)}^4(\mathscr{O}(d))\times \pi_2^{*}\mathscr{O}(d)$ as defined in \cite[Section 11.1.1]{3264}. The relative canonical divisor of $\mathbb{P}(S)\to \mathbb{G}(1,5)$ is $-2H+\sigma_1$. Therefore, the class of $Z_{5,1}$ is
\begin{align*}
    dH_1((d-2)H_1+\sigma_1)((d-4)H_1+2\sigma_1)((d-6)H_1+3\sigma_1)((d-8)H_1+4\sigma_1)dH_2
\end{align*}
inside $A^{\bullet}(\mathbb{P}(S)\times_{\mathbb{G}(1,5)}\mathbb{P}(S))$ by \cite[Theorem 11.2]{3264}.

We will intersect $[Z_{5,1}]\in A^{\bullet}(\mathbb{P}(S)\times_{\mathbb{G}(1,5)}\mathbb{P}(S))$ with the pullback of $\mathbb{G}(1,4)\cong \sigma_{1,1}\subset \mathbb{G}(1,5)$ and the product $H_1H_2$. This must be nonnegative on each component of $Z_{5,1}$ by Kleiman's transversality \cite[Theorem 1.7]{3264} and yields
\begin{align}
    \int_{\mathbb{P}(S)\times_{\mathbb{G}(1,5)}\mathbb{P}(S)}\sigma_{1,1}H_1H_2( dH_1((d-2)H_1+\sigma_1)\cdots((d-8)H_1+4\sigma_1)dH_2=\nonumber\\
        \int_{(\mathbb{P}(S)\times_{\mathbb{G}(1,5)}\mathbb{P}(S))|_{\mathbb{G}(1,4)}}H_1H_2( dH_1((d-2)H_1+\sigma_1)\cdots((d-8)H_1+4\sigma_1)dH_2.\label{eq:2planebound}
\end{align}
In the final integral, the image of $H_1$ in $A^{\bullet}((\mathbb{P}(S)\times_{\mathbb{G}(1,5)}\mathbb{P}(S))|_{\mathbb{G}(1,4)})$ can be identified as the image of $H_1$ under the map $A^{\bullet}(\mathbb{P}^5)\to A^{\bullet}((\mathbb{P}(S)\times_{\mathbb{G}(1,5)}\mathbb{P}(S))|_{\mathbb{G}(1,4)})$ in the commutative diagram of pullback maps of Chow rings
\begin{center}
\begin{tikzcd}
A^{\bullet}(\mathbb{P}^5) \ar[r] \ar[d] & A^{\bullet}(\mathbb{P}(S)\times_{\mathbb{G}(1,5)}\mathbb{P}(S)) \ar[d]\\
A^{\bullet}(\mathbb{P}^4) \ar[r] & A^{\bullet}((\mathbb{P}(S)\times_{\mathbb{G}(1,5)}\mathbb{P}(S))|_{\mathbb{G}(1,4)}).
\end{tikzcd}
\end{center}
Since $H_1^5=0$ in $A^{\bullet}(\mathbb{P}^4)$, the same is true in $A^{\bullet}((\mathbb{P}(S)\times_{\mathbb{G}(1,5)}\mathbb{P}(S))|_{\mathbb{G}(1,4)})$. Therefore, the numerical answer in \eqref{eq:2planebound} is $O(d^4)$. To get the exact numerical value for \eqref{eq:2planebound}, we expand and evaluate with the help of Mathematica and the presentation of the Chow ring of a projective bundle \cite[Theorem 9.6]{3264} to obtain
\begin{align*}
    120 d^2 - 150 d^3 + 35 d^4.
\end{align*}
To finish, we note that every 2-plane $\Lambda$ in $X$ gives rise to a component of $Z_{5,1}$ whose class is the pullback of $\sigma_{2,2}$, the lines in $\Lambda$. Since $\sigma_{2,2}\sigma_{1,1}$ is a point on $\mathbb{G}(1,5)$, $\sigma_{2,2}\sigma_{1,1}H_1H_2$ evaluates to 1 in $A^{\bullet}(\mathbb{P}(S)\times_{\mathbb{G}(1,5)}\mathbb{P}(S))$. 
\begin{comment}
Mathematica code:
(PolynomialQuotient[
   PolynomialQuotient[
    PolynomialRemainder[
     PolynomialRemainder[
      PolynomialRemainder[
       H1*H2 (d*H1) ((d - 2) H1 + s1) ((d - 4) H1 + 
          2*s1) ((d - 6) H1 + 3*s1) ((d - 8) H1 + 4*s1) (d*H2), H1^5, 
       H1], H1^2 - s1*H1 + s11, H1], H2^2 - s1*H2 + s11, H2], H1, H1],
    H2, H2]) /. {s1^6 -> 5, s1^4 *s11 -> 2, s1^2 *s11^2 -> 1, 
  s11^3 -> 1}
  
  outputs 120 d^2 - 150 d^3 + 35 d^4
  
 Flexes to a plane curve gives right answer:
 Simplify[Coefficient[
  PolynomialRemainder[
   PolynomialRemainder[
    Expand[(d*H1) ((d - 2) H1 + s) ((d - 4) H1 + 2*s)], H1^3, H1], 
   H1^2 - s*H1 + s^2, H1], H1*s^2]]
   
   outputs 3 (-2 + d) d
\end{comment}
\end{proof}

\begin{remark}
If one could show that the space $Z_6$ consisting of pairs $(p,\ell)$ where $\ell$ meets $X$ to order 6 at $p$ is of expected dimension 3, then the argument in the proof of \Cref{thm:2planebound} would imply the number of 2-planes in $X$ is at most
\begin{align*}
    1800 d - 1370 d^2 + 225 d^3,
\end{align*}
which one can compare with the $15d^3$ 2-planes in $X$ in the case $X$ is a Fermat hypersurface. 
\begin{comment}
Mathematica code:
PolynomialQuotient[
  PolynomialRemainder[
   PolynomialRemainder[
    H1 (d*H1) ((d - 2) H1 + s1) ((d - 4) H1 + 2*s1) ((d - 6) H1 + 
       3*s1) ((d - 8) H1 + 4*s1) ((d - 10) H1 + 5*s1), H1^6, H1], 
   H1^2 - s1*H1 + s11, H1], H1, H1] /. {s1^6 -> 5, s1^4 *s11 -> 2, 
  s1^2 *s11^2 -> 1, s11^3 -> 1}
  
  Output: 1800 d - 1370 d^2 + 225 d^3
  
  For the flecnode locus, we recover the usual answer
  PolynomialQuotient[
  PolynomialRemainder[
   PolynomialRemainder[
    H1 (d*H1) ((d - 2) H1 + s1) ((d - 4) H1 + 2*s1) ((d - 6) H1 + 
       3*s1), H1^4, H1], H1^2 - s1*H1 + s11, H1], H1, 
  H1] /. {s1^4 -> 2, s1^2 *s11 -> 1, s11^2 -> 1}
  
  Output: -24 d + 11 d^2
\end{comment}

\end{remark}

%\section{Bounding half-dimensional spaces}
%\label{sec:boundinghalfspaces}

%\begin{theorem}
%  \label{theorem:boundplane} A smooth, degree $d \geq 5$ hypersurface
%  $X \subset \P^{5}$ contains at most $35d^4-150d^3+120d^2$
%  $2$-planes.
%\end{theorem}

\bibliographystyle{plain}
\bibliography{main}

\end{document}